\newtheorem{teo}{Theorem}
\newtheorem{lem}{Lemma}
\newdefinition{rmk}{Remark}
\newdefinition{rmks}{Remarks}
\newtheorem{cor}[teo]{Corollary}
\begin{document}

\title{On the zeros of orthogonal polynomials on the unit circle}

\author{Mar\'{\i}a Pilar Alfaro\fnref{thank}}
\ead{palfaro@unizar.es}
\address{Departamento de Matem\'aticas,
Universidad de Zara\-goza,\\
Calle Pedro Cerbuna s/n,
50009 Zaragoza, Spain}

\author{Manuel Bello-Hern\'andez\corref{cor1}\fnref{thank}}
\ead{mbello@unirioja.es}
\address{Dpto. de Matem\'aticas y Computaci\'on,
Universidad de La Rioja,\\
Edif. J. L. Vives, Calle Luis de Ulloa s/n,\\
26004 Logro\~no, Spain}

\author{Jes\'us Mar\'{\i}a Montaner\fnref{thank}}
\ead{montaner@unizar.es}
\address{Departamento de Matem\'atica Aplicada,
Universidad de Zara\-goza,\\
Edificio Torres Quevedo, Calle Mar\'{\i}a de Luna~3,
50018 Zaragoza, Spain}

\fntext[thank]{This research was supported in
part from `Ministerio de Ciencia y Tecnolog\'{\i}a', Project
MTM2009-14668-C02-02}

\cortext[cor1]{Corresponding author}

%%%%%%%%%%%%%%%%%%%%%%%%%%%%%%%%
%%%%%%%%%%%%% Abstract
%%%%%%%%%%%%%%%%%%%%%%%%%%%%%%%%

\begin{abstract}Let $\{z_n\}$ be a sequence in the unit disk $\mathbb{D}=\{z\in\mathbb{C}:|z|<1\}$. It is known that there exists a unique positive Borel measure in the unit circle $\Gamma=\{z\in\mathbb{C}:|z|=1\}$ such that the orthogonal polynomials $\{\Phi_n\}$ satisfy
\[
\Phi_n(z_n)=0
\]
for each $n=1,2,\ldots$. Characteristics of the orthogonality measure and asymptotic properties of the orthogonal polynomial are given in terms of asymptotic behavior of the sequence $\{z_n\}$. Particular attention is paid to periodic sequence of zeros $\{z_n\}$  of period two and three.
\end{abstract}

\begin{keyword}
Orthogonal polynomials \sep varying measures \sep zeros \sep asymptotics.
\MSC Primary 42C05; Secondary 33C47.
\end{keyword}

\maketitle

%%%%%%%%%%%%%%%%%%%%%%%%%%%%%%%%
%%%%%%%%%%%%% Section 1
%%%%%%%%%%%%%%%%%%%%%%%%%%%%%%%%

\section{Introduction}

A figure which displays the zeros of an orthogonal polynomial on the unit circle (OPUC) lets us state some properties of the Verblunsky coefficients and other parameters of OPUC (see Figures \ref{GrafCerosPerDos}--\ref{GrafCerosPerTres2} and Section 8.4 of \cite{SimonTI}). The zeros of OPUCs are eigenvalues of many operators. So, conclusions on the measure and other properties of OPUCs in terms of information about the zeros are interesting.

In the last decade several papers on zeros of OPUC have been published. For instance, we have \cite{MarMclSaf}, \cite{MarMclSaf2}, \cite{Simon1}, \cite{Simon2} and \cite{Simon3}. These articles joint to \cite{Bar-Lop-Saf}, \cite{MhaSaf}, \cite{NevTot}, \cite{Peh-Ste} and the seminal books of Simon, \cite{SimonTI} and \cite{SimonTII}, bring us closer to a better understanding of the properties of the zeros of OPUCs. However, there are several open questions about the zeros of OPUCs, see for example pp. 97--98 of \cite{Simon1}. In \cite{MarMclSaf} and \cite{MarMclSaf2} the properties of the zeros are studied in terms of analytic properties of the orthogonality measure, while in \cite{Simon1}, \cite{Simon2} and \cite{Simon3} the information about the zeros is given in terms of Verblunsky coefficients.  Others interesting problems are the description of properties of the zeros of OPUCs in terms of other parameters which also characterize OPUCs. We will deal with some of these questions in this paper.

We need to introduce some notations to state our results. Let $\mu$ be a nontrivial probability measure on $[0,2\pi)$ and let $\varphi_{n}(z)=\varphi_{n}(z,\mu)=\kappa_nz^n+\ldots, n=0,1,\ldots$ denote their orthonormal polynomials with positive leading coefficients, $\kappa_n>0$,
\[
\langle \varphi_n,\varphi_m\rangle=\frac1{2\pi}\int\varphi_n(e^{i\theta})\overline{\varphi_m(e^{i\theta})}\, d\mu(\theta)=\left\{ \begin{array}{lr}1,&\textnormal{ si }n=m,\\ 0 ,&\textnormal{ si }n\ne m.\end{array}\right.
\]
Let $\Phi_n(z)=\frac{\varphi_{n}(z)}{\kappa_n}$ be the monic OPUC. Then
\begin{equation}\label{recurrencia}
\Phi_{n+1}(z)=z\Phi_n(z)+\Phi_{n+1}(0)\Phi_n^*(z),\quad n\ne 0
\end{equation}
with $\Phi_0(z)=1$. All zeros of $\Phi_n$ lie in the unit disk $\mathbb{D}=\{z\in\mathbb{C}:|z|<1\}$ and, therefore, $\Phi_{n+1}(0)\in\mathbb{D}$. Moreover, Verblunsky's Theorem (see Theorem 1.7.11, p. 97, of \cite{SimonTI}) states that given a sequence $\{\alpha_n:n=1,2,\ldots\}$ in $\mathbb{D}$ there exists a unique probability measure $\mu$ on $[0,2\pi)$ such that $\Phi(0,\mu)=\alpha_n,\, n=1,2,\ldots$.

If $\{z_n\}$ is a sequence in $\mathbb{D}$ such that
\begin{equation}\label{CondicionZn}
\Phi_n(z_n)=0, \quad n=1,2,\ldots,
\end{equation}
then (\ref{recurrencia}) yields
\begin{equation}\label{ReflexionCoefZeros}
\Phi_{n+1}(0)=-z_{n+1}\frac{\Phi_n(z_{n+1})}{\Phi_{n}^*(z_{n+1})}
\end{equation}
and Verblunsky's Theorem tells us that there exists a unique orthogonality measure. So, the OPUC are uniquely determined by a sequence of their zeros, i.e., by a sequence $\{z_n\}$ such that (\ref{CondicionZn}) holds.

In this paper we obtain properties of OPUC in term of properties of a sequence of their zeros. For example, we prove the following result about OPUC with periodic zeros.

\begin{teo}\label{cerosPerTres} Suppose that there exists a common zero for $\Phi_{n}$ and $\Phi_{n-3}$  for all $n$ large enough. Let  $\zeta_j:j=1,2,3$, be such common zeros, i.e., there exist $n_0$ such that for all $n\ge n_0$,
\[
\Phi_n(\zeta_j)=0,\quad n=j\quad \textnormal{mod } 3.
\]
If $r\overset{\textnormal{def}}{=}\max\{|\zeta_j|:j=1,2,3\}\le\frac{-1+\sqrt{5}}{2}$,
\[
\lim_{n\to\infty}\Phi_n(0)=0.
\]
If $r<\frac{-1+\sqrt{5}}{2}$,
\[
\limsup_{n\to\infty}|\Phi_n(0)|^{1/n}\le \frac{r^2}{1-r}<1.
\]
\end{teo}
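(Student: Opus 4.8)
The plan is to translate the hypothesis into a recursion for the Verblunsky coefficients $\alpha_n:=\Phi_n(0)$ and then to control the growth of that recursion.

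\emph{Reduction.} Fix a zero sequence $\{z_n\}$ with $z_n=\zeta_j$ whenever $n\equiv j\pmod 3$ and $n\ge n_0$, and $z_n$ an arbitrary zero of $\Phi_n$ for $n<n_0$. Since $\Phi_n/\Phi_n^{*}$ is a finite Blaschke product, $|\Phi_n(z)/\Phi_n^{*}(z)|\le 1$ on $\overline{\mathbb D}$, so (\ref{ReflexionCoefZeros}) gives the a priori bound $|\alpha_n|\le|z_n|\le r$ for all $n>n_0$. Now iterate (\ref{recurrencia}) together with its reversed companion $\Phi_{n+1}^{*}=\Phi_n^{*}+\overline{\alpha_{n+1}}\,z\,\Phi_n$ three times, obtaining $\Phi_{n+3}=P_n(z)\Phi_n+Q_n(z)\Phi_n^{*}$ with $Q_n(z)=\alpha_{n+1}z^{2}+(\alpha_{n+2}+\alpha_{n+3}\overline{\alpha_{n+2}}\,\alpha_{n+1})z+\alpha_{n+3}$. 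Writing $[n]$ for the residue of $n$ modulo $3$, the point $\zeta_{[n]}$ is for $n\ge n_0$ a common zero of $\Phi_n$ and $\Phi_{n+3}$, while $\Phi_n^{*}(\zeta_{[n]})\neq 0$; hence $Q_n(\zeta_{[n]})=0$, and solving for $\alpha_{n+3}$ yields
\[
\alpha_{n+3}=-\,\zeta_{[n]}\,\frac{\alpha_{n+2}+\zeta_{[n]}\,\alpha_{n+1}}{1+\overline{\alpha_{n+2}}\,\alpha_{n+1}\,\zeta_{[n]}},\qquad n\ge n_0 .
\]
(For period two the same computation collapses to the single step $\alpha_{n+2}=-\zeta_{[n]}\alpha_{n+1}$, which is why that case is elementary.)

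\emph{Vanishing of $\alpha_n$.} Set $a_n=|\alpha_n|$. The recursion, $|\zeta_{[n]}|\le r$, and $a_k\le r$ give
\[
a_{n+3}\ \le\ r\,\frac{a_{n+2}+r\,a_{n+1}}{1-r\,a_{n+1}a_{n+2}} ,
\]
and the right-hand side is increasing in each of $a_{n+1},a_{n+2}\in[0,1)$. So, with $L=\limsup_n a_n$, one gets $L\le rL(1+r)/(1-rL^{2})$; hence $L=0$ or $rL^{2}\ge 1-r-r^{2}$. When $r\le\frac{\sqrt5-1}{2}$ we have $1-r-r^{2}\ge 0$, and combined with $L\le r$ this forces $L=0$ (immediately once $r$ is below a somewhat smaller explicit threshold, and via a finer argument along the three residue classes for the remaining range up to $\frac{\sqrt5-1}{2}$); thus $\Phi_n(0)\to 0$.

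\emph{Rate, and the main obstacle.} For $r<\frac{\sqrt5-1}{2}$ we now know $\alpha_n\to0$, so the denominator in the recursion is eventually arbitrarily close to $1$ and the behaviour is governed by the linear period-three recursion $\alpha_{n+3}\approx -\zeta_{[n]}\alpha_{n+2}-\zeta_{[n]}^{2}\alpha_{n+1}$ with coefficients of modulus $\le r$. One has to show its exponential growth rate is $\le\frac{r^{2}}{1-r}$; since $\frac{r^{2}}{1-r}<1\iff r^{2}+r-1<0\iff r<\frac{\sqrt5-1}{2}$, this gives $\limsup_n|\Phi_n(0)|^{1/n}\le\frac{r^{2}}{1-r}<1$. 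This is the delicate step: a crude passage to moduli, $a_{n+3}\le r\,a_{n+2}+r^{2}a_{n+1}$, only produces a rate governed by the golden ratio, which exceeds $\frac{r^{2}}{1-r}$, so to reach the stated constant one must use the denominator $1+\overline{\alpha_{n+2}}\alpha_{n+1}\zeta_{[n]}$ together with the cancellations coming from the coupling among the three residue classes — for instance by diagonalising the monodromy matrix $\bigl(\begin{smallmatrix}-\zeta_3&-\zeta_3^{2}\\1&0\end{smallmatrix}\bigr)\bigl(\begin{smallmatrix}-\zeta_2&-\zeta_2^{2}\\1&0\end{smallmatrix}\bigr)\bigl(\begin{smallmatrix}-\zeta_1&-\zeta_1^{2}\\1&0\end{smallmatrix}\bigr)$ of the linearisation and estimating its spectral radius in terms of $r$ — and organising that estimate is where the bulk of the work lies.
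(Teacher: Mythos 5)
Your derivation of the key recursion is exactly the paper's route: iterate (\ref{recurrencia}) (together with its reversed form) twice, evaluate at the common zero $\zeta$ of $\Phi_{n+3}$ and $\Phi_n$, use $\Phi_n^*(\zeta)\neq0$, and solve for $\alpha_{n+3}$; your formula agrees with the one in the paper's proof. The genuine gap in your first half is quantitative: you bound the denominator below by $1-r\,a_{n+1}a_{n+2}$, and with that the fixed-point inequality $L\le rL(1+r)/(1-rL^{2})$ together with $L\le r$ only forces $L=0$ when $r^{3}+r^{2}+r-1<0$, i.e.\ $r\lesssim0.544$; for the remaining range up to $\frac{\sqrt5-1}{2}$ you appeal to an unspecified ``finer argument along the three residue classes'', which is not supplied and is not the needed fix. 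The missing ingredient is the elementary pseudo-hyperbolic inequality $\left|\frac{u+v}{1+\overline{u}v}\right|\le\frac{|u|+|v|}{1+|u||v|}$, which gives $a_{n+3}\le r\,\frac{a_{n+2}+r\,a_{n+1}}{1+r\,a_{n+1}a_{n+2}}$ (plus sign in the denominator); then your own limsup computation gives, for $L>0$, $1+rL^{2}\le r+r^{2}$, i.e.\ $rL^{2}\le r^{2}+r-1\le0$ for all $r\le\frac{\sqrt5-1}{2}$, a contradiction, so $L=0$ on the whole range. This sharper modulus bound is precisely what the paper uses (there packaged as a monotone majorant sequence $a_{n+1}=r\frac{ra_{n-1}+a_n}{1+ra_{n-1}a_n}$).

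The second assertion is not proved in your proposal at all: you only name a strategy (diagonalise the period-three monodromy of the linearisation) and explicitly defer ``the bulk of the work''. Worse, that strategy cannot produce the stated constant: each companion factor has determinant $\zeta_j^{2}$, so the monodromy $M$ satisfies $|\det M|=|\zeta_1\zeta_2\zeta_3|^{2}$, whence $\rho(M)^{1/3}\ge|\zeta_1\zeta_2\zeta_3|^{1/3}$, which equals $r$ when all three moduli equal $r$ and exceeds $\frac{r^{2}}{1-r}$ for every $r<\frac12$ (and explicit examples, e.g.\ $\zeta_j=r e^{2\pi ij/3}$, give $\rho(M)^{1/3}\approx1.54\,r$). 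So no spectral-radius estimate of the linearisation can get below $\frac{r^{2}}{1-r}$ there; one would have to show the actual orbit lies in the subdominant direction, which you do not address. Your instinct that this is the delicate point is well founded, though: the paper's own proof of this half leans on items (iv)--(v) of an auxiliary lemma stated without proof, and that step uses $\frac{a_{n+1}}{a_n}\le\frac{a_{n+1}}{a_{n-1}}$, which fails for a decreasing majorant; in fact the majorant's consecutive ratios tend to $r\,\frac{1+\sqrt5}{2}>\frac{r^{2}}{1-r}$, so the argument as written only yields $\limsup_n|\Phi_n(0)|^{1/n}\le r\,\frac{1+\sqrt5}{2}$, which is $<1$ exactly when $r<\frac{\sqrt5-1}{2}$, but not the stated bound $\frac{r^{2}}{1-r}$. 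In short: part one of your proof is fixable by the hyperbolic-metric inequality, while part two is missing, and the route you sketch for it would fail for $r<\frac12$.
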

The numerical experiments show that when the three common zeros have magnitude greater than $\frac{-1+\sqrt{5}}{2}$, then the zeros are uniformly distributed on three arcs of unit
circle like those corresponding to a measure supported on three arcs, see Figures \ref{GrafCerosPerTres} and \ref{GrafCerosPerTres2}. If the sequence $\{z_n\}$ is periodic of period two, then the Verblunsky coefficients are an asymptotic periodic sequence, a situation studied in \cite{Bar-Lop-Saf} and \cite{Simon2}.

A completely different situation to  $\{z_n\}$ period takes place when it is dense in $\mathbb{D}$ and so also the zeros of the OPUC are dense in $\mathbb{D}$. This case was studied by Khrushchev in \cite{Kru}. He showed that there exist OPUCs with dense zeros in $\mathbb{D}$ with orthogonality measure in  many classes of measures, including Szeg\H{o} measures, measures with absolutely
convergent series of their Verblunsky parameters (see also Example 1.7.18, p. 98, of \cite{SimonTI}). We prove that the zeros can not be ``approach the unit circle too fast'' if and only if the orthogonality measure lies in the Nevai class.

\begin{teo}\label{TeoVelCirOPUC} The following statements are equivalent
\[
\lim_{n\to\infty}\Phi_n(0)=0
\]
and
\[
\lim_{n\to\infty}\sum_{j=1}^{n}(1-|z_{n,j}|)=\infty,
\]
where $\{z_{n,j}:j=1,\ldots,n\}$ are the zeros of $\Phi_n$.
\end{teo}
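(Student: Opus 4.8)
The whole statement rests on the factorization $\Phi_n(z)=\prod_{j=1}^{n}(z-z_{n,j})$, which gives $|\Phi_n(0)|=\prod_{j=1}^{n}|z_{n,j}|$, i.e.
\[
\log\frac{1}{|\Phi_n(0)|}=\sum_{j=1}^{n}\log\frac{1}{|z_{n,j}|}=\sum_{j=1}^{n}g_{\mathbb{D}}(z_{n,j},0),
\]
$g_{\mathbb D}(\cdot,0)=\log(1/|\cdot|)$ being the Green function of $\mathbb D$ with pole at $0$. Thus ``$\Phi_n(0)\to 0$'' means ``$\sum_j\log(1/|z_{n,j}|)\to\infty$'', and the theorem is a comparison between the two series $\sum_j\log(1/|z_{n,j}|)$ and $\sum_j(1-|z_{n,j}|)$, for which I will use only $1-t\le\log(1/t)$ on $(0,1]$ and $\log(1/t)\le 2(1-t)$ on $[\tfrac12,1]$. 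The implication $\sum_j(1-|z_{n,j}|)\to\infty\Rightarrow\Phi_n(0)\to0$ is then immediate: term by term $\log(1/|z_{n,j}|)\ge 1-|z_{n,j}|$, so $\log(1/|\Phi_n(0)|)\ge\sum_j(1-|z_{n,j}|)\to\infty$; this half uses nothing about orthogonality. The reverse implication, however, cannot follow from the opposite comparison alone — one zero decaying geometrically in $n$ would break it for a general array in $\mathbb D$ — so it must use the recursion (\ref{recurrencia}).

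For the reverse implication I argue by contradiction. Suppose $\Phi_n(0)\to0$ while $\liminf_n\sum_{j}(1-|z_{n,j}|)=L<\infty$; fix $M>L$ and a subsequence $(n_k)$ with $\sum_{j}(1-|z_{n_k,j}|)\le M$. For a fixed small $\eta\in(0,\tfrac12]$, $\Phi_{n_k}$ has at most $M/\eta$ zeros of modulus $<1-\eta$; write $\Phi_{n_k}=P_kQ_k$, $P_k$ the monic factor carrying these ``inner'' zeros (so $\deg P_k\le M/\eta$), $Q_k$ the one carrying the rest. By $\log(1/t)\le 2(1-t)$ on $[1-\eta,1)$, $|Q_k(0)|=\prod_{\text{non-inner}}|z_{n_k,j}|\ge e^{-2M}$, so from $|\Phi_{n_k}(0)|=|P_k(0)|\,|Q_k(0)|\to0$ and $\deg P_k$ bounded we get $|P_k(0)|\to0$, which forces $\Phi_{n_k}$ to have a zero $w_k\to0$. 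So in this ``bad'' situation each $\Phi_{n_k}$ has only boundedly many zeros outside a fixed neighbourhood of $\Gamma$, yet a zero tending to the origin.

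The engine is a propagation argument for $D_m(r):=\#\{\,j:|z_{m,j}|<r\,\}$. Put $B_m=\Phi_m/\Phi_m^{\ast}=\prod_j\frac{z-z_{m,j}}{1-\overline{z_{m,j}}\,z}$, a finite Blaschke product; since $\Phi_m^{\ast}$ is zero-free on $\overline{\mathbb D}$, (\ref{recurrencia}) shows the zeros of $\Phi_{m+1}$ in $\mathbb D$ are the solutions of $zB_m(z)=-\Phi_{m+1}(0)$. The key lemma I would isolate is: for each $n_k$ there is a radius $\rho_k\in[\tfrac{9}{10},\tfrac{19}{20}]$ avoiding the boundedly many moduli of the non-peripheral zeros of $\Phi_{n_k}$ with $\min_{|z|=\rho_k}|B_{n_k}(z)|\ge c_0>0$, $c_0=c_0(M)$; this follows from $\big|\tfrac{z-w}{1-\overline w z}\big|^2=1-\tfrac{(1-|z|^2)(1-|w|^2)}{|1-\overline w z|^2}$, since the factors from zeros $O(1)$-close to $\Gamma$ contribute at least $-C(M)$ to $\log|B_{n_k}|$ on $|z|=\rho_k$ (bounded because $\sum_j(1-|z_{n_k,j}|)\le M$) and each of the remaining boundedly many factors is bounded below by the choice of $\rho_k$. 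Granting this, $\Phi_m(0)\to0$ gives $|\Phi_{n_k+1}(0)|<\rho_k c_0\le\min_{|z|=\rho_k}|zB_{n_k}(z)|$ for large $k$, and Rouch\'e applied to $zB_{n_k}(z)+\Phi_{n_k+1}(0)=\Phi_{n_k+1}(z)/\Phi_{n_k}^{\ast}(z)$ yields $D_{n_k+1}(\rho_k)=D_{n_k}(\rho_k)+1$; the same comparison run backwards (via $\Phi_{m-1}=\frac{\Phi_m-\Phi_m(0)\Phi_m^{\ast}}{z(1-|\Phi_m(0)|^2)}$, whose zeros together with $z=0$ are the solutions of $B_m(z)=\Phi_m(0)$) gives $D_{n_k-1}(\rho_k)=D_{n_k}(\rho_k)-1$.

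The plan is then to iterate: applied also to $\Phi_{n_k+1},\Phi_{n_k+2},\dots$ these identities would give $D_{n_k+j}(\rho_k)\ge j+1$, hence $\sum_i(1-|z_{n_k+j,i}|)\ge(1-\rho_k)(j+1)\to\infty$; comparing consecutive good indices should then bound their gaps by $O(M)$ and yield a contradiction with $\sum_j(1-|z_{m,j}|)\le M$ holding infinitely often. The main obstacle is exactly this iteration: to keep applying Rouch\'e one must re-establish $\min_{|z|=\rho}|B_m(z)|\ge c_0$ at the non-good indices $m=n_k+1,n_k+2,\dots$, i.e.\ show the zeros of the intermediate $\Phi_m$ do not accumulate near $|z|=\rho$. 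I expect this is handled by tracking, through (\ref{recurrencia}) and the $\ast$-recursion $\Phi_{m+1}^{\ast}=\Phi_m^{\ast}+\overline{\Phi_{m+1}(0)}\,z\Phi_m$, a splitting of $\Phi_m$ into a ``main part'' whose value at $0$ stays bounded below and a correction measured by tails of the type $\sum_{i\ge m_0}|\Phi_{i+1}(0)|$, so that the inner zeros move slowly and $\rho$ can be fixed over the relevant window; alternatively one quotes a known localisation result for zeros of OPUC in the Nevai class. With the propagation secured on a window of indices, the contradiction closes the proof.
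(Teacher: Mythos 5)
The easy half of your argument is fine: from $|\Phi_n(0)|=\prod_j|z_{n,j}|$ and $1-t\le\log(1/t)$ the implication $\sum_j(1-|z_{n,j}|)\to\infty\Rightarrow\Phi_n(0)\to0$ is immediate and needs no orthogonality. The converse, however, is not proved. Your Rouch\'e/zero-counting scheme breaks down exactly where it has to do work: the lower bound $\min_{|z|=\rho_k}|B_{n_k}(z)|\ge c_0(M)$ is available only at the exceptional indices $n_k$ where $\sum_j(1-|z_{n_k,j}|)\le M$ (that is the only place the contradiction hypothesis enters), but to iterate $D_{m+1}(\rho)=D_m(\rho)+1$ you must re-establish the same bound at all intermediate indices $m=n_k+1,n_k+2,\dots$, where you have no control on the zero configuration. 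You acknowledge this (``the main obstacle'') and only gesture at a splitting of $\Phi_m$ driven by tails of $\sum|\Phi_{i+1}(0)|$ or at an unnamed localisation result; the closing step (``comparing consecutive good indices should bound their gaps by $O(M)$'') is likewise only a plan. This is a genuine gap, and not a cosmetic one: in the Nevai class the zeros of intermediate $\Phi_m$ can behave wildly --- Khrushchev's examples \cite{Kru}, quoted in the introduction, have $\Phi_n(0)\to0$ with zeros dense in $\mathbb{D}$ --- so ``the inner zeros move slowly'' cannot be taken for granted, and any proof along your lines would have to supply precisely the missing uniform control.

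For comparison, the paper's proof of this direction never touches the recursion step by step. It first invokes the standard equivalence (Theorem 1.7.4 of \cite{SimonTI}) that $\Phi_n(0)\to0$ if and only if $\Phi_n/\Phi_n^*\to0$ uniformly on compact subsets of $\mathbb{D}$ --- much stronger information than the single value at $z=0$ that your factorization uses --- and then proves a purely polynomial statement (Lemma \ref{LemaCovCeroSumaCeros}, a generalization of Walsh's theorem): for monic $V_n$ with zeros in $\mathbb{D}$, $V_n/V_n^*\to0$ on compacts is equivalent to $\sum_j(1-|v_{n,j}|)\to\infty$. Its hard half is soft: if the sums stayed $\le M$ along a subsequence, then either the zeros omit a fixed disk $\{z:|z-z_0|<r\}\subset\mathbb{D}$ along a further subsequence --- and then, after a M\"obius change of variable sending $z_0$ to $0$, the convergence of the Blaschke quotient at that point gives $\sum_j\log(1/|\cdot|)\to\infty$ with all moduli $\ge r$, hence $\sum_j(1-|\cdot|)\to\infty$, a contradiction --- or one can nest subsequences so that $V_n$ has a zero in each of $k$ disjoint disks of radius $r$ centred on $\{|z|=1/2\}$ with $k(1/2-r)>M$, again a contradiction. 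If you want to repair your write-up, the shortest route is to keep your easy half, quote Simon's equivalence for the hard half, and replace the propagation argument by this dichotomy (or by averaging $-\log|\Phi_n/\Phi_n^*|$ over $|z|=1/2$, where Jensen's formula bounds the mean of each Blaschke factor's $-\log$ by a constant times $1-|z_{n,j}|$); the only genuinely OPUC ingredient needed is that equivalence, not fine control of (\ref{recurrencia}).
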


The proof of this theorem is included in Section \ref{SecDisZerUC}. This section also contains a study of the rate with which the zeros of OPUCs of the Chebyshev weight on an arc of the unit circle approach to $\partial(\mathbb{D})$. This example joints the cases studied in \cite{MarMclSaf2} of measure with a finite number of singularities in the unit circle and analytic measure corroborate  Theorem \ref{TeoVelCirOPUC}. In Section \ref{SecPropGen} we make some remarks about general properties of OPUCs in terms of the sequence of zeros stated. In Section \ref{SeccionDosCerosPerid} we study OPUC with periodic zeros of period two, it is $\{z_n\}$ has period two. The proof of Theorem  \ref{cerosPerTres} is contained in Section \ref{SeccionTresCerosPerid}. We include several figures which display zeros of OPUC for different sequences $\{z_n\}$ stated.

\section{General properties}\label{SecPropGen}

Let $\{z_n:n=1,2,\ldots\}$ be a sequence in $\mathbb{D}$ and let $\{\Phi_n\}$ be the sequence of monic orthogonal polynomials satisfying (\ref{CondicionZn}), i.e.,
\[
\Phi_{n+1}(0)=-z_{n+1}\frac{\Phi_n(z_{n+1})}{\Phi_{n}^*(z_{n+1})}=-z_{n+1} \frac{z_{n+1}-z_n}{1-\overline{z_n}z_{n+1}}\prod_{j:z_{n,j}\neq z_n}\frac{z_{n+1}-z_{n,j}}{1-\overline{z_{n,j}}z_{n+1}}.
\]
As $\left|\frac{z_{n+1}-z_{n,j}}{1-\overline{z_{n,j}}z_{n+1}}\right|<1$, the following result follows.

\begin{lem} If $\lim \frac{z_{n+1}-z_n}{1-\overline{z_n}z_{n+1}}=0,$ then $\lim_n\Phi_n(0)=0.$
In particular, if $\lim z_n=z_0, \quad|z_0|<1$,  $\lim_n\Phi_n(0)=0$. Moreover, if $w_n=|z_n-z_0|$,
\begin{equation}\label{cotaRaizN-esimaEnTerminosVelocidad}
\limsup|\Phi_n(0)|^{1/n}\le \limsup w_n^{1/n}.
\end{equation}
If $\displaystyle\nu=\lim_{n\to\infty,\, n\in\Lambda} \nu_{\Phi_n}$ \footnote{$\nu_{\Phi_n}\overset{\textnormal{def}}{=}\frac1n\sum_{j=1}^{n}\delta_{\{z_{n,j\}}}$. The limit of a sequence of measures throughout this paper is taken in the weak-* topology.}, $\lim z_n=z_0$, and $\nu(\{z_0\})=0$,
\begin{equation}\label{cotaRaizN-esima}
\limsup_{n\to\infty,\, n\in\Lambda}|\Phi_{n+1}(0)|^{1/(n+1)}\le \exp\int\log\left|\frac{z_0-\zeta}{1-\overline{\zeta}z_0}\right|d\nu.
\end{equation}
\end{lem}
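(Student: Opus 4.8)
The plan is to start from the exact product formula already displayed for $\Phi_{n+1}(0)$, namely
\[
\Phi_{n+1}(0)=-z_{n+1}\frac{z_{n+1}-z_n}{1-\overline{z_n}z_{n+1}}\prod_{j:z_{n,j}\neq z_n}\frac{z_{n+1}-z_{n,j}}{1-\overline{z_{n,j}}z_{n+1}},
\]
and to take $(n+1)$-st roots of absolute values along the subsequence $\Lambda$. Writing $|\Phi_{n+1}(0)|^{1/(n+1)}$ as the exponential of $\frac1{n+1}\log|\Phi_{n+1}(0)|$, the first three factors ($|z_{n+1}|$, and the Blaschke factor built from $z_n,z_{n+1}$) contribute $\frac1{n+1}\times O(1)$ — here one uses only that $|z_{n+1}|\le 1$ and that the single Blaschke factor is bounded by $1$ — and hence vanish in the limit. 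So the whole problem reduces to the asymptotics of
\[
\frac1{n+1}\sum_{j:z_{n,j}\neq z_n}\log\left|\frac{z_{n+1}-z_{n,j}}{1-\overline{z_{n,j}}z_{n+1}}\right|,
\]
which is (essentially, up to the omitted $j$ with $z_{n,j}=z_n$ and up to replacing $z_{n+1}$ by its limit $z_0$) an integral of $\log|b_{z_{n+1}}(\zeta)|$ against the zero-counting measure $\nu_{\Phi_n}$, where $b_w(\zeta)=\frac{w-\zeta}{1-\overline{\zeta}w}$.

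The key steps, in order, are: (i) absorb the $O(1)$ factors as above; (ii) replace $z_{n+1}$ by $z_0$ in the integrand, controlling the error by the uniform (on compact subsets of $\mathbb D$, hence near $z_0$) continuity of $w\mapsto \log|b_w(\zeta)|$ together with $z_{n+1}\to z_0$ — this needs a little care because $\log|b_w(\zeta)|$ blows up as $\zeta\to w$, which is exactly why the hypothesis $\nu(\{z_0\})=0$ is imposed; (iii) pass to the weak-$*$ limit $\nu_{\Phi_n}\to\nu$ along $\Lambda$ in $\int \log|b_{z_0}(\zeta)|\,d\nu_{\Phi_n}(\zeta)$; and (iv) handle the single omitted term $z_{n,j}=z_n$ and the discrepancy $\frac1{n+1}$ versus $\frac1n$, both of which are harmless. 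For (iii) one wants to invoke the principle that $\limsup \int f\,d\nu_{\Phi_n}\le \int f\,d\nu$ when $f$ is upper semicontinuous and bounded above; since $\log|b_{z_0}(\cdot)|\le 0$ on $\overline{\mathbb D}$ and is continuous except for a $-\infty$ singularity at $z_0$, it is upper semicontinuous and bounded above, so this applies directly and yields the desired $\limsup$ bound $\exp\int\log|b_{z_0}(\zeta)|\,d\nu$.

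The main obstacle is step (ii)–(iii) carried out together: the integrand depends on $n$ (through $z_{n+1}$), so it is not literally a fixed upper semicontinuous function integrated against the weakly convergent sequence $\nu_{\Phi_n}$. The clean way around this is to fix $\varepsilon>0$, truncate $\log|b_{z_0}(\zeta)|$ from below at level $-M$ to get a bounded continuous $g_M$, note that for $w$ close enough to $z_0$ one has $\log|b_w(\zeta)|\le g_M(\zeta)+\varepsilon$ for all $\zeta\in\overline{\mathbb D}$ (using $\log|b_w|\le 0$ on the region where $g_M=-M$, which for $M$ large is a small neighbourhood of $z_0$, and uniform continuity elsewhere), then integrate against $\nu_{\Phi_n}$, let $n\to\infty$ along $\Lambda$ using weak-$*$ convergence on the continuous function $g_M$, and finally let $M\to\infty$ invoking $\nu(\{z_0\})=0$ and monotone convergence so that $\int g_M\,d\nu\to\int\log|b_{z_0}|\,d\nu$. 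The requirement $\nu(\{z_0\})=0$ is precisely what guarantees that this last limit does not pick up spurious mass from the singularity; without it the right-hand side could fail to be the correct bound. Estimate \eqref{cotaRaizN-esima} then follows by exponentiating.
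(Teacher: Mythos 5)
Your overall strategy -- take logarithms in the exact product formula for $\Phi_{n+1}(0)$, absorb the finitely many $O(1)$ factors, and pass to the weak-$*$ limit of $\nu_{\Phi_n}$ with a cut-off near the singularity of the kernel at $z_0$, invoking $\nu(\{z_0\})=0$ at the very end -- is the same as the paper's (which, like you, really only proves (\ref{cotaRaizN-esima}) and regards the first assertions and (\ref{cotaRaizN-esimaEnTerminosVelocidad}) as immediate from the product formula). However, the step you yourself identify as the crux is justified incorrectly. Writing $b_w(\zeta)=\frac{w-\zeta}{1-\overline{\zeta}\,w}$ and $g_M=\max(\log|b_{z_0}|,-M)$, you claim that for $w$ close enough to $z_0$ one has $\log|b_w(\zeta)|\le g_M(\zeta)+\varepsilon$ for all $\zeta\in\overline{\mathbb D}$, and on the set $\{g_M=-M\}$ you justify this by ``$\log|b_w|\le 0$''. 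On that set the inequality you need is $\log|b_w(\zeta)|\le -M+\varepsilon$, and $\log|b_w|\le 0$ gives nothing of the sort: take $\zeta=z_0$, where $g_M(z_0)=-M$ while $\log|b_w(z_0)|=\log\rho(w,z_0)$ (with $\rho(w,\zeta)=|b_w(\zeta)|$ the pseudo-hyperbolic distance) is a fixed finite number exceeding $-M+\varepsilon$ as soon as $M$ is large relative to $-\log\rho(w,z_0)$. So the domination is unproved exactly where it matters, near the singularity.

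The domination you want is in fact true, but only with the quantifiers in the right order and with a real argument: by the triangle inequality $\rho(w,\zeta)\le\rho(w,z_0)+\rho(z_0,\zeta)$, if $\rho(w,z_0)\le(e^{\varepsilon}-1)e^{-M}$ then $\log|b_w(\zeta)|\le g_M(\zeta)+\varepsilon$ on all of $\overline{\mathbb D}$ (treat the regions $\rho(z_0,\zeta)\le e^{-M}$ and $\rho(z_0,\zeta)\ge e^{-M}$ separately); since $|z_0|<1$ and $z_{n+1}\to z_0$, this holds for all large $n\in\Lambda$ for each fixed pair $(M,\varepsilon)$, and then your weak-$*$ step on the bounded continuous $g_M$ and the final monotone-convergence step $M\to\infty$ (where $\nu(\{z_0\})=0$ enters) do go through. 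Note that ``$w$ close enough to $z_0$'' must be allowed to depend on $M$; a closeness threshold independent of $M$, which is what your wording suggests, cannot work. The paper sidesteps all of this more simply: it splits the product according to $|z_{n,j}-z_0|<\delta$ or $\ge\delta$, discards the nearby factors using only $|b|<1$, passes to the limit in the remaining integral (where the kernel $\log|b_{z_{n+1}}|$ converges uniformly to $\log|b_{z_0}|$), and only afterwards lets $\delta\to 0$ by monotonicity, using $\nu(\{z_0\})=0$. Either repair is fine; the proof as you wrote it is not complete.
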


\begin{proof}
We only check (\ref{cotaRaizN-esima}) because of the other statements are trivial. Let $\delta>0$ and $L=\limsup_{n\to\infty,\, n\in\Lambda}|\Phi_{n+1}(0)|^{1/(n+1)}$. Then
\[
\Phi_{n+1}(0)=-z_{n+1}\prod_{j:|z_{n,j}-z_0|< \delta}\frac{z_{n+1}-z_{n,j}}{1-\overline{z_{n,j}}z_{n+1}} \prod_{j:|z_{n,j}-z_0|\ge \delta}\frac{z_{n+1}-z_{n,j}}{1-\overline{z_{n,j}}z_{n+1}}
\]
\begin{multline*}
\Rightarrow\frac1{n+1}\log|\Phi_{n+1}(0)|\le \frac1{n+1}\sum_{j:|z_{n,j}-z_0|\ge \delta}\log\left|\frac{z_{n+1}-z_{n,j}}{1-\overline{z_{n,j}}z_{n+1}}\right|\\=\frac{n}{n+1} \int_{|\zeta-z_0|\ge\delta} \log\left|\frac{z_{n+1}-\zeta}{1-\overline{\zeta}z_{n+1}}\right|d\nu_{\Phi_n}(\zeta).
\end{multline*}
Since
\[
\lim_n\log\left|\frac{z_{n+1}-\zeta}{1-\overline{\zeta}z_{n+1}}\right|=
\log\left|\frac{z_{0}-\zeta}{1-\overline{\zeta}z_{0}}\right|
\]
uniformly on $\textnormal{sop}(\nu)\cap \{\zeta:|\zeta-z_0|\ge\delta\}$ and $\lim_{n\in\Lambda}\nu_{\Phi_n}=\nu$, we have
\[
\log L\le \int_{|\zeta-z_0|\ge\delta}\log\left|\frac{z_{0}-\zeta}{1-\overline{\zeta}z_{0}}\right|d\nu(\zeta),
\]
since $\delta>0$ is arbitrary and $\left|\frac{z_{0}-\zeta}{1-\overline{\zeta}z_{0}}\right|\le1$ for all
$\zeta:|\zeta|\le 1$, the integral above is non-positive and monotone decreasing as  function of $\delta$. Thus, the proof finishes using hypothesis $\nu(\{z_0\})=0$.
\end{proof}

\begin{rmks}
\begin{enumerate}
\item In \cite{MhaSaf} it proved that if $\lim_{n\to\infty}\sum_{j=1}^n\Phi_j(0)=0$ and $\Lambda$ is an infinite subset of natural numbers such that
     \[
    \lim_{n\to\infty,\, n\in\Lambda}|\Phi_n(0)|^{1/n}=\limsup|\Phi_n(0)|^{1/n}\overset{\textnormal{def}}{=}L,
    \]
    then
    \[
    \lim_{n\to\infty,\, n\in\Lambda}\nu_{\Phi_n}=m_L,
    \]
    where $m_L$ is the Lebesgue measure on the circle of radius $L$ (the Mhaskar-Saff circle). From (\ref{cotaRaizN-esimaEnTerminosVelocidad}), $\limsup|w_n|^{1/n}$ is a bound of the radius $L$ where the zeros of the polynomials of degree  $n\in\Lambda$  are uniformly distributed.

\item If $\limsup|\Phi_n(0)|^{1/n}<1$, in \cite{Simon1}, Simon proved that the rate of convergence of the zeros to the Nevai-Totik points is geometric. So this rate is slower than the radius of the Mhaskar-Saff circle.

\item If $L>0$ and $|z_0|\le L$, then
    \begin{equation}\label{EquationAux9}
    \int\log\left|\frac{z_0-\zeta}{1-\overline{\zeta}z_0}\right|dm_L(\zeta)=\log L.
    \end{equation}
    Thus, (\ref{cotaRaizN-esima}) yields
\[
\limsup_{n\to\infty,\, n\in\Lambda}|\Phi_{n+1}(0)|^{1/(n+1)}
\]
less than or equal to the infimum of $L$ such that $m_L$ is weak-* limit of some convergent subsequence $\{\nu_{\Phi_n}\}$ and
\[
\{z:|z|\le L\}\cap\left(\bigcup_{k=1}^{\infty}\overline{\bigcap_{n=k}^{\infty}\{z:\Phi_n(z)=0\}}\right) \ne\emptyset.
\]
\end{enumerate}
\end{rmks}

Since $\Phi_{n+1}(0)=(-1)^{n+1}\prod_j z_{n+1,j}$, (\ref{ReflexionCoefZeros}) implies
\[
\Phi_{n+1}(0)^{n}=\prod_j\frac{\Phi_n(z_{j,n+1})}{\Phi_n^*(j,z_{n+1})}
\]
\[
\Leftrightarrow |\Phi_{n+1}(0)|^{1/(n+1)}=\exp\iint\log\left|\frac{z-w}{1-\overline{w}z}\right| d\nu_{\Phi_{n+1}}(z)d\nu_{\Phi_n}(w)
\]
So, in addition to (\ref{cotaRaizN-esima}) we have the following

\begin{lem} If $\nu_1=\lim_{n\to\infty,n\in\Lambda}\nu_{\Phi_n}$ and $\nu_2=\lim_{n\to\infty,n\in\Lambda}\nu_{\Phi_{n+1}}$, then
\begin{equation}\label{cotaRaizN-esima2}
\limsup_{n\to\infty,n\in\Lambda}|\Phi_{n+1}(0)|^{1/(n+1)}\le \iint\log\left|\frac{z-w}{1-\overline{w}z}\right| d\nu_2(z)d\nu_1(w).
\end{equation}
\end{lem}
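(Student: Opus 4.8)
The plan is to take logarithms in the identity displayed just before the statement, namely
\[
\frac{1}{n+1}\log|\Phi_{n+1}(0)|=\iint \log\left|\frac{z-w}{1-\overline{w}z}\right| d\nu_{\Phi_{n+1}}(z)\, d\nu_{\Phi_n}(w),
\]
and to pass to the weak-$*$ limit along $\Lambda$. Put $K(z,w)=\log\bigl|(z-w)/(1-\overline{w}z)\bigr|$. Two elementary facts drive the argument: $K\le 0$ on $\overline{\mathbb{D}}\times\overline{\mathbb{D}}$, since $|1-\overline{w}z|^{2}-|z-w|^{2}=(1-|z|^{2})(1-|w|^{2})\ge 0$; and away from the diagonal $K$ is continuous and bounded. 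Hence it suffices to show
\[
\limsup_{n\to\infty,\,n\in\Lambda}\iint K\, d\nu_{\Phi_{n+1}}\, d\nu_{\Phi_n}\le \iint K\, d\nu_2\, d\nu_1,
\]
and then exponentiate.

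First I would record that $\nu_{\Phi_{n+1}}\times\nu_{\Phi_n}\to\nu_2\times\nu_1$ weak-$*$ on the compact set $\overline{\mathbb{D}}\times\overline{\mathbb{D}}$ (it is enough to test against products $g(z)h(w)$, which span a dense subspace of $C(\overline{\mathbb{D}}\times\overline{\mathbb{D}})$). Then, mirroring the proof of (\ref{cotaRaizN-esima}), I would excise a $\delta$-neighbourhood of the diagonal: fixing $\delta>0$ and using $K\le 0$ to discard the region $\{|z-w|<\delta\}$,
\[
\frac{1}{n+1}\log|\Phi_{n+1}(0)|\le \iint_{|z-w|>\delta}K(z,w)\, d\nu_{\Phi_{n+1}}(z)\, d\nu_{\Phi_n}(w).
\]
On $\{(z,w)\in\overline{\mathbb{D}}\times\overline{\mathbb{D}}:|z-w|\ge\delta\}$ one has $|1-\overline{w}z|\ge|z-w|\ge\delta$, so $\log(\delta/2)\le K\le 0$ there and $\mathbf{1}_{\{|z-w|>\delta\}}K$ is a bounded upper semicontinuous function on $\overline{\mathbb{D}}\times\overline{\mathbb{D}}$. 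For such a function, integration against a weak-$*$ convergent sequence of probability measures is upper semicontinuous (write it as a decreasing pointwise limit of continuous functions and use monotone convergence — the principle of descent), so taking $\limsup$ along $\Lambda$ gives, for every $\delta>0$,
\[
\limsup_{n\to\infty,\,n\in\Lambda}\frac{1}{n+1}\log|\Phi_{n+1}(0)|\le \iint_{|z-w|>\delta}K\, d\nu_2(z)\, d\nu_1(w).
\]

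Finally I would let $\delta\downarrow0$: the sets $\{|z-w|>\delta\}$ increase to $\{z\ne w\}$, and since $K\le 0$ the right-hand side decreases, its limit being $\iint K\, d\nu_2\, d\nu_1$ by monotone convergence — here one uses that $\nu_2\times\nu_1$ gives no mass to the diagonal, which holds because the zeros of the $\Phi_n$ lie in $\mathbb{D}$, so $\nu_1$ and $\nu_2$ have no common atom (in the cases of interest, e.g.\ $\nu_1=\nu_2=m_L$, both are continuous). Exponentiating yields (\ref{cotaRaizN-esima2}).

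The step I expect to be the delicate one is precisely the passage to the weak-$*$ limit through the kernel $K$, which is unbounded below and, on the portion of the diagonal lying on the unit circle, not even well defined ($\log 0-\log 0$); one cannot naively substitute $\nu_1,\nu_2$ into the double integral. Excising the $\delta$-neighbourhood of the diagonal is what makes the limit passage legitimate, since it reduces it to a bounded continuous integrand, after which $K\le 0$ and the absence of common atoms make the $\delta\downarrow0$ limit routine. The auxiliary facts — weak-$*$ convergence of the product measures and the opening identity — are either elementary or already established in the excerpt.
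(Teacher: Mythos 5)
Your strategy is the same descent-type argument as the paper's, but the way you regularize the kernel leaves a genuine gap at the final step. After excising the strip $\{|z-w|\le\delta\}$ you obtain, for every $\delta>0$, a bound by $\iint_{|z-w|>\delta}K\,d\nu_2\,d\nu_1$ with $K(z,w)=\log\left|\frac{z-w}{1-\overline{w}z}\right|$, and letting $\delta\downarrow0$ this decreases only to the integral of $K$ over the off-diagonal set $\{z\ne w\}$. To identify that with $\iint K\,d\nu_2\,d\nu_1$ you need $(\nu_2\times\nu_1)$ to give no mass to the diagonal inside $\mathbb{D}$, i.e.\ that $\nu_1$ and $\nu_2$ have no common atom in $\mathbb{D}$. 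Your justification --- ``the zeros of the $\Phi_n$ lie in $\mathbb{D}$'' --- does not give this: the hypotheses of the lemma allow common interior atoms. For normalized Lebesgue measure $\Phi_n(z)=z^n$, so $\nu_{\Phi_n}=\delta_0$ for every $n$ and $\nu_1=\nu_2=\delta_0$; nothing in the hypotheses prevents a positive fraction of the zeros of both $\Phi_n$ and $\Phi_{n+1}$ from clustering at one interior point. In exactly that case the right-hand side is $-\infty$ (so, after exponentiating, the assertion is $\limsup_{n\in\Lambda}|\Phi_{n+1}(0)|^{1/(n+1)}=0$), which is the nontrivial content your argument does not reach, since your bound retains only the off-diagonal integral, which may be finite.

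The gap is patchable in either of two ways. Directly: if $\nu_1(\{z_0\})\ge a>0$ and $\nu_2(\{z_0\})\ge b>0$ with $z_0\in\mathbb{D}$, then for large $n\in\Lambda$ at least $\tfrac{a}{2}n$ zeros of $\Phi_n$ and $\tfrac{b}{2}n$ zeros of $\Phi_{n+1}$ lie in $\{|z-z_0|<\epsilon\}$, each of the corresponding $\tfrac{ab}{4}n^2$ factors in the exact product identity is at most the pseudohyperbolic diameter $\rho(\epsilon)$ of that ball, whence $\limsup_{n\in\Lambda}|\Phi_{n+1}(0)|^{1/(n+1)}\le\rho(\epsilon)^{ab/4}\to0$ as $\epsilon\to0$. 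Or as the paper does: $K$ is non-positive and upper semicontinuous on all of $\overline{\mathbb{D}}\times\overline{\mathbb{D}}$, hence the pointwise decreasing limit of continuous functions $g_m\le0$ (Theorem 1.1 of \cite{Saf-Tot}); then $|\Phi_{n+1}(0)|^{1/(n+1)}\le\exp\iint g_m\,d\nu_{\Phi_{n+1}}\,d\nu_{\Phi_n}$, weak-$*$ convergence of the product measures gives the limsup bound with $g_m$, and monotonicity in $m$ recovers the full integral of $K$, diagonal included, with no excision and no atom hypothesis. (You are right, incidentally, that the inequality being proved is the exponentiated one, $\limsup_{n\in\Lambda}|\Phi_{n+1}(0)|^{1/(n+1)}\le\exp\iint K\,d\nu_2\,d\nu_1$, as in the identity preceding the lemma; the missing $\exp$ in (\ref{cotaRaizN-esima2}) is a typo.)
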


\begin{proof} The function $f(z,w)=\log\left|\frac{z-w}{1-\overline{w}z}\right| $ is non-positive upper semicontinuous in $\overline{\mathbb{D}}\times\overline{\mathbb{D}}$, so there is a monotone decreasing sequence of non-positive continuous function $\{g_m\}$ such that $f(z,w)=\lim_mg_m(z,w)$ pointwise in $\overline{\mathbb{D}}\times\overline{\mathbb{D}}$ (see Theorem 1.1, p.1, in \cite{Saf-Tot}). Thus,
\begin{multline*}
|\Phi_{n+1}(0)|^{1/(n+1)}= \exp\iint\log\left|\frac{z-w}{1-\overline{w}z}\right| d\nu_{\Phi_{n+1}}(z)d\nu_{\Phi_n}(w)\\ \le \exp\iint g_m(z,w) d\nu_{\Phi_{n+1}}(z)d\nu_{\Phi_n}(w),
\end{multline*}
and since $\lim_{n\to\infty}(\nu_{\Phi_{n+1}}\times\nu_{\Phi_{n}})=\nu_2\times\nu_1$, by the monotone convergence theorem the conclusion follows immediately.
\end{proof}

\begin{rmk}
According to (\ref{EquationAux9}), if $\limsup_{n\to\infty}|\Phi_{n+1}(0)|^{1/(n+1)}=L$ and $\nu_1=\nu_2=m_L$, then (\ref{cotaRaizN-esima2}) becomes an equality.
\end{rmk}

\section{Zeros of period two}\label{SeccionDosCerosPerid}
\begin{lem}\label{LemVerCerPerDos} If the sequence $\{z_n\}$ is periodic of period two, i.e.,
\begin{equation}\label{cerPerDos}
z_n=\left\{\begin{array}{lr}\alpha_1,&  $n$ \textnormal{ odd} \\ \alpha_2,& n \textnormal{ even} \end{array}\right.
\end{equation}
with $\{\alpha_1,\alpha_2\}\subset\mathbb{D}$, then
\begin{equation}\label{coefRecDos1}
\Phi_{1}(0)=-\alpha_1,\quad \Phi_2(0)=-\alpha_2\, C_{\alpha_1,\alpha_2},
\end{equation}
and for all $n\ge 3$,
\begin{equation}\label{coefRecDos2}
\Phi_{n}(0)=(-1)^{n-1}C_{\alpha_1,\alpha_2} \left\{\begin{array}{lr}\alpha_1^{(n-1)/2}\alpha_2^{(n-1)/2},& n \textnormal{ odd},\\ \alpha_1^{-1+n/2}\alpha_2^{n/2},& n \textnormal{ even},\end{array}\right.
\end{equation}
where $C_{\alpha_1,\alpha_2}=\frac{\alpha_2-\alpha_1}{1-\overline{\alpha_1}\alpha_2}$.
\end{lem}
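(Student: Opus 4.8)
The plan is to compute the Verblunsky coefficients $\Phi_n(0)$ directly from the recursion \eqref{ReflexionCoefZeros}, namely $\Phi_{n+1}(0)=-z_{n+1}\,\Phi_n(z_{n+1})/\Phi_n^*(z_{n+1})$, using the periodicity $z_{2k-1}=\alpha_1$, $z_{2k}=\alpha_2$. First I would establish the base cases: since $\Phi_0\equiv 1$ we get $\Phi_1(z)=z+\Phi_1(0)$ and the condition $\Phi_1(z_1)=\Phi_1(\alpha_1)=0$ forces $\Phi_1(0)=-\alpha_1$; then $\Phi_1^*(z)=1+\overline{\Phi_1(0)}z=1-\overline{\alpha_1}z$, so \eqref{ReflexionCoefZeros} with $n=1$, $z_2=\alpha_2$ gives $\Phi_2(0)=-\alpha_2\,(\alpha_2-\alpha_1)/(1-\overline{\alpha_1}\alpha_2)=-\alpha_2 C_{\alpha_1,\alpha_2}$, which is \eqref{coefRecDos1}.

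The heart of the argument is to show that the ratio $\Phi_n(z_{n+1})/\Phi_n^*(z_{n+1})$ collapses to something trivial for $n\ge 2$. The key observation is that once $n\ge 2$, the polynomial $\Phi_n$ vanishes at \emph{both} $\alpha_1$ and $\alpha_2$: indeed $z_n$ equals whichever of $\alpha_1,\alpha_2$ matches the parity of $n$, and I claim the other one is also a zero of $\Phi_n$. To see this, note that $\Phi_n(z_n)=0$ by hypothesis, and from \eqref{recurrencia} written at step $n$ one has $\Phi_n(z)=z\Phi_{n-1}(z)+\Phi_n(0)\Phi_{n-1}^*(z)$; evaluating at $z=z_{n-1}$ (the ``other'' point) and using $\Phi_{n-1}(z_{n-1})=0$ gives $\Phi_n(z_{n-1})=\Phi_n(0)\Phi_{n-1}^*(z_{n-1})$. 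So $\Phi_n$ vanishes at $z_{n-1}$ exactly when $\Phi_n(0)=0$ or $\Phi_{n-1}^*(z_{n-1})=0$; the latter is impossible since zeros of $\Phi_{n-1}^*$ lie outside $\overline{\mathbb{D}}$. This is not quite ``both zeros'' for free, so instead I would argue more robustly: write $\Phi_n(z)=(z-\alpha_1)(z-\alpha_2)Q_{n-2}(z)$ is \emph{not} generally true, so the cleaner route is to iterate \eqref{ReflexionCoefZeros} in two-step blocks. Using the factorization $\Phi_n(z)=\prod_j(z-z_{n,j})$ and $\Phi_n^*(z)=\prod_j(1-\overline{z_{n,j}}z)$, \eqref{ReflexionCoefZeros} reads $\Phi_{n+1}(0)=-z_{n+1}\prod_j\frac{z_{n+1}-z_{n,j}}{1-\overline{z_{n,j}}z_{n+1}}$; when $z_{n+1}$ coincides with one of the $z_{n,j}$ (which happens for $n\ge 2$, since then $z_{n+1}=z_{n-1}$ is a zero of $\Phi_n$ by the displayed identity above together with the fact, provable by the same two-step recursion, that $\Phi_n(0)\ne 0$ generically — or simply by induction from \eqref{coefRecDos1}--\eqref{coefRecDos2}), that factor is zero, which is wrong. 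Hence the right formulation: for $n\ge 2$, $z_{n+1}$ is \emph{not} a zero of $\Phi_n$, and the clean statement is that exactly one factor in the product telescopes against the previous step.

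Concretely, I would prove \eqref{coefRecDos2} by induction on $n$, treating the even and odd cases together via the relation obtained by composing two consecutive instances of \eqref{ReflexionCoefZeros}. Assuming the formula holds up to index $n$, one has $\Phi_{n+1}(0)=-z_{n+1}\Phi_n(z_{n+1})/\Phi_n^*(z_{n+1})$ with $z_{n+1}\in\{\alpha_1,\alpha_2\}$; the point is that $\Phi_n(z_{n+1})$ can be evaluated using \eqref{recurrencia}: $\Phi_n(z_{n+1})=z_{n+1}\Phi_{n-1}(z_{n+1})+\Phi_n(0)\Phi_{n-1}^*(z_{n+1})$, and since $z_{n+1}=z_{n-1}$ we get $\Phi_{n-1}(z_{n+1})=0$, so $\Phi_n(z_{n+1})=\Phi_n(0)\Phi_{n-1}^*(z_{n+1})$. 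Therefore
\[
\Phi_{n+1}(0)=-z_{n+1}\,\Phi_n(0)\,\frac{\Phi_{n-1}^*(z_{n+1})}{\Phi_n^*(z_{n+1})}.
\]
Next I would show the ratio $\Phi_{n-1}^*(z_{n+1})/\Phi_n^*(z_{n+1})$ simplifies: applying $*$ to \eqref{recurrencia} at step $n$ gives $\Phi_n^*(z)=\Phi_{n-1}^*(z)+\overline{\Phi_n(0)}\,z\,\Phi_{n-1}(z)$, hence at $z=z_{n+1}=z_{n-1}$, $\Phi_n^*(z_{n+1})=\Phi_{n-1}^*(z_{n+1})$ (the second term dies). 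Thus the ratio is $1$ and we obtain the elegant recursion $\Phi_{n+1}(0)=-z_{n+1}\Phi_n(0)$ valid for all $n\ge 2$. Iterating this, $\Phi_n(0)=(-1)^{n-2}z_n z_{n-1}\cdots z_3\,\Phi_2(0)=(-1)^{n-1}C_{\alpha_1,\alpha_2}\prod_{k=3}^n z_k$, and evaluating $\prod_{k=3}^n z_k$ according to how many $\alpha_1$'s and $\alpha_2$'s appear (which depends on the parity of $n$) yields exactly \eqref{coefRecDos2}. The main obstacle is the bookkeeping in establishing $\Phi_{n+1}(0)=-z_{n+1}\Phi_n(0)$ for $n\ge 2$ — specifically verifying that $z_{n-1}$ is genuinely a zero of $\Phi_{n-1}$ available to feed into \eqref{recurrencia} at step $n$, which requires $n-1\ge 1$, i.e.\ $n\ge 2$, matching the range where the simplified recursion holds; the $n=2$ case must be checked by hand since it connects to the base case $\Phi_2(0)$ rather than to the general pattern.
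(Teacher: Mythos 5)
Your argument is essentially the paper's own: the paper composes (\ref{recurrencia}) with its dual Szeg\H{o} recurrence into the two-step relation (\ref{recurrencia2}) and evaluates it at the common zero $\zeta=z_{n+1}=z_{n-1}$ of $\Phi_{n+1}$ and $\Phi_{n-1}$, which yields exactly the key identity $\Phi_{n+1}(0)=-z_{n+1}\Phi_n(0)$ for $n\ge 2$ that you obtain by evaluating numerator and denominator of (\ref{ReflexionCoefZeros}) separately, followed by the same iteration from (\ref{coefRecDos1}). Two cosmetic points only: the exploratory middle paragraph should be deleted, and since $\Phi_2(0)=-\alpha_2\,C_{\alpha_1,\alpha_2}$ the iteration gives $\Phi_n(0)=(-1)^{n-1}C_{\alpha_1,\alpha_2}\prod_{k=2}^{n}z_k$ (you dropped the factor $\alpha_2$), after which the parity count does give (\ref{coefRecDos2}).
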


\begin{proof} Iterating (\ref{recurrencia}), we obtain
\begin{equation}\label{recurrencia2}
\Phi_{n+1}(z)= z\left(z+\overline{\Phi_n(0)}\Phi_{n+1}(0)\right)\Phi_{n-1}(z)+ \left(\Phi_{n+1}(0)+z\Phi_n(0)\right)\Phi_{n-1}^*(z),
\end{equation}
$n\ge 2$. If $\Phi_{n+1}$ and $\Phi_{n-1}$ have a common zero, $\zeta$, then setting $z=\zeta$ we get
\[
\Phi_{n+1}(0)=-\zeta\Phi_n(0)
\]
which proves the lemma.
\end{proof}

\begin{rmk} If $\min\{|\alpha_1|,|\alpha_2|\}=0$ or $\alpha_1=\alpha_2$, $\Phi_n(0)=0$ for all $n\ge 3$ and
\[
\Phi_n(z)=z^{n-2}\Phi_2(z),\quad\forall n\ge 3.
\]
Thus, we will assume throughout this section that
\[
\min\{|\alpha_1|,|\alpha_2|\}>0\quad \textnormal{and}\quad \alpha_1\ne \alpha_2.
\]
\end{rmk}

In \cite{Bar-Lop-Saf} it is studied OPUCs with Verblunsky coefficients satisfying $\lim_n\Phi_n(0)=0$ and there exists a natural number $k$ such that
\begin{equation}\label{PeriodVerCoef}
\lim_{n\to\infty,\, n=j\, \textnormal{mod }k}\frac{\Phi_{n+1}(0)}{\Phi_{n}(0)}\, \textnormal{exists, }\quad j=1,2,\ldots,k.
\end{equation}
From Lemma \ref{LemVerCerPerDos}, (\ref{PeriodVerCoef}) holds when $\{z_n\}$ is periodic of period two.

In \cite{Simon2} going on OPUCs satisfying (\ref{PeriodVerCoef}). There, it is required that there exists $\Delta\in(0,1)$ such that
\begin{equation}\label{SimonCoefRef}
\Phi_n(0)=\sum_{j=1}^lC_jb_j^{n}+O(\Delta b^n)
\end{equation}
where $0\not\in\{C_j\}$, $\{b_j\}$ are distinct and $|b_j|=|b|<1,\, j=1,\ldots,l$. In our case, if the sequence $\{z_n\}$ is periodic of period two, then (\ref{SimonCoefRef}) holds with
\[
C_1=-\frac{C_{\alpha_1,\alpha_2}}{2}(\alpha_1+\frac1{\sqrt{\alpha_1\alpha_2}}),\quad C_2=-\frac{C_{\alpha_1,\alpha_2}}{2}(\alpha_1-\frac1{\sqrt{\alpha_1\alpha_2}})
\]
and
\[
b_1=\sqrt{\alpha_1\alpha_2},\quad b_2=-\sqrt{\alpha_1\alpha_2}.
\]
Therefore, all results proved in \cite{Simon2} also hold for OPUCs with two periodic zeros. For example,

\begin{cor}\label{CorAsinDentro} If $\{z_n\}$ satisfies (\ref{cerPerDos}),
\[
\lim_{k}\frac{\Phi_{2k}(z)}{\alpha_1^k\alpha_2^kC_{\alpha_1,\alpha_2}} =\frac{D(0)D(z)^{-1}}{(\alpha_1\alpha_2-z^2)}(z-\alpha_2),
\]
\[
\lim_{k}\frac{\Phi_{2k+1}(z)}{\alpha_1^k\alpha_2^kC_{\alpha_1,\alpha_2}} =\frac{\alpha_2 D(0)D(z)^{-1}}{(\alpha_1\alpha_2-z^2)}(\alpha_1-z),
\]
uniformly on each compact subset of $\{z:|z|<\sqrt{|\alpha_1\alpha_2|}\}$, where
\begin{equation}\label{funcSzego}
D(z)=D(z,\mu)=\exp{\left(\frac1{2\pi}\int\frac{e^{i\theta}+z} {e^{i\theta}-z}\log(\mu'(e^{i\theta}))d\theta\right)}
\end{equation}
is the Szeg\H{o} function. As
\[
\kappa\overset{\textnormal{def}}{=}\lim_n\kappa_n=D(0)^{-1}=\prod_{j=1}^{\infty}(1-|\Phi_{j}(0)|^2)^{-1/2}<\infty,
\]
the analogous results also hold for orthonormal polynomials.
\end{cor}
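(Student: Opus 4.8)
The plan is to deduce the two Szeg\H{o}-type limits by combining the classical Szeg\H{o} asymptotics for $\varphi_n^*$ with the recurrence (\ref{recurrencia}) composed with itself: because the Verblunsky coefficients are explicitly known from Lemma \ref{LemVerCerPerDos}, the resulting twofold recurrence for $\{\Phi_{2k}\}$ and $\{\Phi_{2k+1}\}$ collapses to an elementary scalar affine recursion whose fixed point is the asserted limit. First I would record the preliminaries. By Lemma \ref{LemVerCerPerDos} and the standing hypotheses $0<\min\{|\alpha_1|,|\alpha_2|\}$, $\alpha_1\ne\alpha_2$, the coefficients $\Phi_n(0)$ decay geometrically, their modulus being a constant times $|\alpha_1\alpha_2|^{n/2}$; hence $\sum_n|\Phi_n(0)|<\infty$, so $\mu$ lies in the Szeg\H{o} class, the function $D$ of (\ref{funcSzego}) is analytic and zero-free in $\mathbb{D}$, and the classical theory (see \cite{SimonTI}) gives $\varphi_n^*(z)\to D(z)^{-1}$ uniformly on compact subsets of $\mathbb{D}$ together with $\kappa_n\to D(0)^{-1}=\prod_{j=1}^{\infty}(1-|\Phi_j(0)|^2)^{-1/2}<\infty$ (the parenthetical claim in the statement). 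Since $\varphi_n=\kappa_n\Phi_n$ with $\kappa_n>0$, this gives $\Phi_n^*(z)=\varphi_n^*(z)/\kappa_n\to D(0)D(z)^{-1}$ uniformly on compacts of $\mathbb{D}$; this is the only analytic input needed, and once the monic limits are established the orthonormal ones follow by multiplying through by $\kappa_n\to D(0)^{-1}$.

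Next I would iterate (\ref{recurrencia}) once,
\[
\Phi_{n+2}(z)=z^2\Phi_n(z)+z\,\Phi_{n+1}(0)\,\Phi_n^*(z)+\Phi_{n+2}(0)\,\Phi_{n+1}^*(z),
\]
and replace $\Phi_n^*$ and $\Phi_{n+1}^*$ by $D(0)D(z)^{-1}$, incurring an error that is $o(1)$ after division by $|\alpha_1\alpha_2|^{n/2}$. The key algebraic point, from Lemma \ref{LemVerCerPerDos}, is that the combinations $z\,\Phi_{2k+1}(0)+\Phi_{2k+2}(0)=C_{\alpha_1,\alpha_2}(\alpha_1\alpha_2)^k(z-\alpha_2)$ and $z\,\Phi_{2k}(0)+\Phi_{2k+1}(0)=C_{\alpha_1,\alpha_2}\alpha_2(\alpha_1\alpha_2)^{k-1}(\alpha_1-z)$ factor as $C_{\alpha_1,\alpha_2}(\alpha_1\alpha_2)^{\lfloor n/2\rfloor}$ times a degree-one polynomial in $z$. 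Setting $u_k(z)=\Phi_{2k}(z)/\bigl(C_{\alpha_1,\alpha_2}(\alpha_1\alpha_2)^k\bigr)$ and $v_k(z)=\Phi_{2k+1}(z)/\bigl(C_{\alpha_1,\alpha_2}(\alpha_1\alpha_2)^k\bigr)$, the cases $n=2k$ and $n=2k-1$ of the displayed identity, divided by the appropriate power of $\alpha_1\alpha_2$, become
\[
u_{k+1}(z)=\frac{z^2}{\alpha_1\alpha_2}\,u_k(z)+\frac{(z-\alpha_2)D(0)D(z)^{-1}}{\alpha_1\alpha_2}+\varepsilon_k(z),
\]
\[
v_k(z)=\frac{z^2}{\alpha_1\alpha_2}\,v_{k-1}(z)+\frac{\alpha_2(\alpha_1-z)D(0)D(z)^{-1}}{\alpha_1\alpha_2}+\delta_k(z),
\]
with $\varepsilon_k,\delta_k\to 0$ uniformly on each compact subset of $\{z:|z|<\sqrt{|\alpha_1\alpha_2|}\}$.

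Finally I would solve these affine scalar recursions. On a compact set $K\subset\{|z|<\sqrt{|\alpha_1\alpha_2|}\}$ one has $\sup_K|z^2/(\alpha_1\alpha_2)|=\rho<1$, and any recursion $w_k=q(z)w_{k-1}+c(z)+\eta_k(z)$ with $\sup_K|q|=\rho$, $c$ bounded on $K$ and $\sup_K|\eta_k|\to0$ keeps $\{w_k\}$ bounded on $K$ and satisfies $w_k\to c(z)/(1-q(z))$ uniformly on $K$ (subtract the fixed point, iterate the estimate $\|w_k-\tfrac{c}{1-q}\|_K\le\rho\|w_{k-1}-\tfrac{c}{1-q}\|_K+\|\eta_k\|_K$, and pass to $\limsup$). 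As $1-z^2/(\alpha_1\alpha_2)=(\alpha_1\alpha_2-z^2)/(\alpha_1\alpha_2)$, this gives $u_k(z)\to(z-\alpha_2)D(0)D(z)^{-1}/(\alpha_1\alpha_2-z^2)$ and $v_k(z)\to\alpha_2(\alpha_1-z)D(0)D(z)^{-1}/(\alpha_1\alpha_2-z^2)$ uniformly on $K$, which are precisely the asserted limits; multiplying by $\kappa_n\to D(0)^{-1}$ produces the orthonormal version. The routine parts are the uniform estimate on $\varepsilon_k,\delta_k$ (it uses only the Szeg\H{o} convergence of $\Phi_n^*$ and the geometric decay of $\Phi_n(0)$) and the contraction argument; what deserves emphasis is that the disk $\{|z|<\sqrt{|\alpha_1\alpha_2|}\}$ in the statement is exactly the region where the natural multiplier $z^2/(\alpha_1\alpha_2)$ is a contraction, and that it is the factorization coming from Lemma \ref{LemVerCerPerDos} that makes the problem scalar in the first place. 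Alternatively, one can bypass the computation and simply quote \cite{Simon2}, since the Verblunsky coefficients have the form (\ref{SimonCoefRef}) with $l=2$ and $b_{1,2}=\pm\sqrt{\alpha_1\alpha_2}$.
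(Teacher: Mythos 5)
Your proposal is correct, but it proves the corollary by a genuinely different route than the paper. The paper's own argument is purely by citation: having computed from Lemma \ref{LemVerCerPerDos} that the Verblunsky coefficients satisfy Simon's hypothesis (\ref{SimonCoefRef}) with $l=2$, $b_{1,2}=\pm\sqrt{\alpha_1\alpha_2}$ and the stated $C_1,C_2$, it simply imports the asymptotics of \cite{Simon2} (your closing sentence is exactly the paper's proof). You instead give a self-contained derivation: the doubled recurrence $\Phi_{n+2}=z^2\Phi_n+z\Phi_{n+1}(0)\Phi_n^*+\Phi_{n+2}(0)\Phi_{n+1}^*$, the factorizations $z\Phi_{2k+1}(0)+\Phi_{2k+2}(0)=C_{\alpha_1,\alpha_2}(\alpha_1\alpha_2)^k(z-\alpha_2)$ and $z\Phi_{2k}(0)+\Phi_{2k+1}(0)=C_{\alpha_1,\alpha_2}\alpha_2(\alpha_1\alpha_2)^{k-1}(\alpha_1-z)$ (both of which I checked against (\ref{coefRecDos2}), and whose limits are consistent with $\Phi_{2k}(\alpha_2)=0$, $\Phi_{2k+1}(\alpha_1)=0$ and with the values at $z=0$), the classical Szeg\H{o} input $\Phi_n^*\to D(0)D^{-1}$ available because $\sum|\Phi_n(0)|<\infty$, and a contraction argument for the affine recursion with multiplier $z^2/(\alpha_1\alpha_2)$, whose fixed point $c/(1-q)$ is exactly the asserted limit. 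The error-term control works because $\Phi_{2k+1}(0)/(C_{\alpha_1,\alpha_2}(\alpha_1\alpha_2)^{k+1})$ and $\Phi_{2k+2}(0)/(C_{\alpha_1,\alpha_2}(\alpha_1\alpha_2)^{k+1})$ are bounded constants, so dividing the uniformly vanishing differences $\Phi_m^*-D(0)D^{-1}$ by the normalization indeed yields $\varepsilon_k,\delta_k\to0$ uniformly on compacts. What each approach buys: yours is elementary, independent of \cite{Simon2}, and makes transparent why the disk $\{|z|<\sqrt{|\alpha_1\alpha_2|}\}$ is the natural region (it is where the multiplier is a uniform contraction); the paper's citation route is shorter and, more importantly, gives access at once to the finer results of \cite{Simon2} (such as Theorem \ref{TeoRegionCritica} and the clock behavior of zeros), which your scalar fixed-point argument does not reach.
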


In particular, if (\ref{cerPerDos}) is satisfied
\[
\lim_{n\to\infty}\frac{\Phi_{n+2}(z)}{\Phi_{n}(z)}=\alpha_1\alpha_2
\]
uniformly on compact subset of $\{z:|z|<\sqrt{|\alpha_1\alpha_2|}\}\setminus\{\alpha_1,\alpha_2\}$. Actually, we have only to delete from the disk $\{z:|z|<\sqrt{|\alpha_1\alpha_2|}\}$ that value $\alpha_1$ or $\alpha_2$ of lower magnitude. This result is proved in \cite{Bar-Lop-Saf} under the more general condition (\ref{PeriodVerCoef}).

It is worthwhile asymptotic behavior in an annulus about the critical circle $\{z:|z|=|\sqrt{|\alpha_1\alpha_2|}\}$ . It requires a parameter  $\Delta_1$ associated to a fine look at of the Verblunsky coefficients. Doing again the calculations in \cite{Simon2}, we obtain $\Delta_1=\sqrt{|\alpha_1\alpha_2|}$ and the following result.

\begin{teo}\label{TeoRegionCritica} If (\ref{cerPerDos}) holds, $D^{-1}(z),\, |z|<1$, admites a meromorphic extension, $D_{int}^{-1}$, to $\{z:|z|<\frac1{|\alpha_1\alpha_2|}\}$ with exactly two poles at $\pm\frac1{\sqrt{\alpha_1\alpha_2}}$ which is analytic in $\{z:|z|<\frac1{\sqrt{|\alpha_1\alpha_2|}}\}$. Moreover,
\[
\lim_n\Phi_n^*(z)=D(0)D_{int}^{-1}(z)
\]
uniformly on compact sets of $\{z:|z|<\frac1{\sqrt{|\alpha_1\alpha_2|}}\}$. Hence, in $\{z:|z|>\sqrt{|\alpha_1\alpha_2|}\}$,
\[
\lim_n\frac{\Phi_n(z)}{z^n}=D(0)\overline{D_{int}(1/\overline{z})}^{-1}.
\]

Moreover, let
\[
R_n(z)=\Phi_{n+1}(0)\left(\Phi_n^*(z)-D(0)D^{-1}(z)\right),
\]
and
\[
s_n(z)=\sum_{j=0}^{\infty}z^{-j-1}R_{n+j}(z).
\]
Then for all $\epsilon>0$,
\[
\max_{|z|\le 1}|R_n(z)|\le C ((|\alpha_1\alpha_2|+\epsilon)^n),
\]
the sums defining each $s_n$ converge in
\[
\mathbb{A}=\{z:|\alpha_1\alpha_2|<|z|<1\},
\]
in this set it defines an analytic function and if $\epsilon>0$ is sufficiently small,
\[
|s_n(z)|\le C \frac{(|\alpha_1\alpha_2|+\epsilon)^n}{|z|-|\alpha_1\alpha_2|-\epsilon}.
\]
For $z\in \mathbb{A}$, we have
\[
\Phi_{2k}(z)=s_{2k}(z)+\frac{C_{\alpha_1,\alpha_2}D(0)D(z)^{-1}\alpha_1^k\alpha_2^k} {(\alpha_1\alpha_2-z^2)}(z-\alpha_1)+z^{2k}D(0)\overline{D_{int}(1/\overline{z})}^{-1},
\]
\[
\Phi_{2k+1}(z) =s_{2k+1}(z)+\frac{\beta C_{\alpha_1,\alpha_2}D(0)D(z)^{-1}\alpha_1^k\alpha_2^k}{(\alpha_1\alpha_2-z^2)}(\alpha_1-z) +z^{2k+1}D(0)\overline{D_{int}(1/\overline{z})}^{-1}.
\]
\end{teo}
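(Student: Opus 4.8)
The plan is to read the statement as a concrete instance of Simon's analysis of Verblunsky coefficients with competing exponential decay in \cite{Simon2}, the extra ingredient here being that Lemma \ref{LemVerCerPerDos} gives the coefficients \emph{exactly}: $\Phi_n(0)=C_1b_1^n+C_2b_2^n$ for $n\ge 3$, with $b_1=\sqrt{\alpha_1\alpha_2}$, $b_2=-\sqrt{\alpha_1\alpha_2}$ and $C_1,C_2\ne 0$ as recorded before the statement. In particular $|\Phi_n(0)|\asymp|\alpha_1\alpha_2|^{n/2}$, the Verblunsky coefficients decay geometrically, $\mu$ is a Szeg\H{o} measure (indeed in the Nevai--Totik class), and (\ref{SimonCoefRef}) holds with $l=2$ and vanishing remainder. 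Hence every conclusion of \cite{Simon2} transfers once its parameters are evaluated in the present situation; the only step that is not bookkeeping is the identification of the ``second scale'' $\Delta_1$ of that theory, and redoing Simon's computation with the exact $\Phi_n(0)$ above gives $\Delta_1=\sqrt{|\alpha_1\alpha_2|}$, which is precisely what puts the two poles of $D_{int}^{-1}$ at $\pm 1/\sqrt{\alpha_1\alpha_2}$ and lets the meromorphic continuation reach radius $1/|\alpha_1\alpha_2|$.

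For the first two displays I would telescope the $*$-Szeg\H{o} recursion $\Phi_{n+1}^*(z)=\Phi_n^*(z)+\overline{\Phi_{n+1}(0)}\,z\,\Phi_n(z)$ and use the Szeg\H{o} asymptotics $\Phi_n^*\to D(0)D(z)^{-1}$ on compact subsets of $\mathbb D$ to obtain
\[
D(0)D(z)^{-1}-\Phi_N^*(z)=z\sum_{n\ge N}\overline{\Phi_{n+1}(0)}\,\Phi_n(z),\qquad |z|<1.
\]
Since $|\Phi_{n+1}(0)|\lesssim(\sqrt{|\alpha_1\alpha_2|})^{n}$ and $|\Phi_n(z)|\le C\max(1,|z|)^{n}$ (the geometric decay of $\{\Phi_n(0)\}$ places $\mu$ in the Nevai--Totik class, so $\Phi_n^*$ is uniformly bounded on $\overline{\mathbb D}$ and $|\Phi_n(z)|\lesssim|z|^{n}$ for $|z|\ge 1$), the series on the right converges locally uniformly on $\{|z|<1/\sqrt{|\alpha_1\alpha_2|}\}$; as $\Phi_N^*$ is a polynomial this furnishes the analytic continuation $D_{int}^{-1}$ of $D^{-1}$ to that disc, and letting $N\to\infty$ (the tail of a convergent series) gives $\Phi_N^*\to D(0)D_{int}^{-1}$ there, uniformly on compacts. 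The outside statement is then immediate from $\Phi_n(z)=z^n\overline{\Phi_n^*(1/\bar z)}$: for $|z|>\sqrt{|\alpha_1\alpha_2|}$ the point $1/\bar z$ lies in $\{|w|<1/\sqrt{|\alpha_1\alpha_2|}\}$, whence $\Phi_n(z)/z^n=\overline{\Phi_n^*(1/\bar z)}\to\overline{D(0)D_{int}^{-1}(1/\bar z)}=D(0)\,\overline{D_{int}(1/\bar z)}^{-1}$, using $D(0)>0$.

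To push the continuation to $\{|z|<1/|\alpha_1\alpha_2|\}$ and locate the poles I would substitute $\Phi_n(z)=z^n\overline{D(0)D_{int}^{-1}(1/\bar z)}+z^n\overline{R_n(1/\bar z)}/\overline{\Phi_{n+1}(0)}$ into the telescoped identity and insert the exact expression for $\Phi_{n+1}(0)$. The leading part sums to an explicit rational function of $z$ --- two geometric series in $b_1z$ and $b_2z$ --- whose only singularities are the simple poles at $z=\pm 1/\sqrt{\alpha_1\alpha_2}$; the remaining part is a series in $R_n$, and since $|R_n(z)|=|\Phi_{n+1}(0)|\,|\Phi_n^*(z)-D(0)D^{-1}(z)|\le C|\alpha_1\alpha_2|^{n}$ on $\overline{\mathbb D}$ (and $\lesssim|\alpha_1\alpha_2|^{n}|z|^{n}$ for $|z|\ge 1$, by the same telescoping bound), this series converges on $\{|z|<1/|\alpha_1\alpha_2|\}$ and contributes no further singularities. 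Matching the two pieces yields the meromorphy on $\{|z|<1/|\alpha_1\alpha_2|\}$ with exactly the two stated poles --- this is the assertion $\Delta_1=\sqrt{|\alpha_1\alpha_2|}$. The estimates $\max_{|z|\le1}|R_n(z)|\le C(|\alpha_1\alpha_2|+\epsilon)^n$ and, on $\mathbb A$, $|s_n(z)|\le C(|\alpha_1\alpha_2|+\epsilon)^n/(|z|-|\alpha_1\alpha_2|-\epsilon)$, together with the convergence and analyticity of $s_n$ there, are then routine geometric-series bookkeeping from $s_n(z)=\sum_{j\ge0}z^{-j-1}R_{n+j}(z)$.

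Finally, the closed forms for $\Phi_{2k}$ and $\Phi_{2k+1}$ on $\mathbb A$ come from telescoping the plain recursion in the form $\Phi_{n+1}(z)/z^{n+1}=\Phi_n(z)/z^n+\Phi_{n+1}(0)\,z^{-n-1}\Phi_n^*(z)$: summing from $N$ to $\infty$ (legitimate on $\mathbb A$ by the geometric bounds) gives $\Phi_N(z)=z^N D(0)\overline{D_{int}(1/\bar z)}^{-1}-z^N\sum_{n\ge N}\Phi_{n+1}(0)z^{-n-1}\Phi_n^*(z)$, and then writing $\Phi_n^*=D(0)D^{-1}+R_n/\Phi_{n+1}(0)$ splits the sum into the $R_n$-part, which collapses to $-s_N(z)$ after reindexing, and the sum $D(0)D^{-1}(z)\sum_{n\ge N}\Phi_{n+1}(0)z^{-n-1}$; inserting $\Phi_{n+1}(0)=C_1b_1^{n+1}+C_2b_2^{n+1}$ and summing the two geometric series $\sum(b_j/z)^{n+1}$ produces $D(0)D^{-1}(z)$ times a rational function with denominator $\alpha_1\alpha_2-z^2$ and a numerator linear in $z$ that depends only on the parity of $N$. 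Taking $N=2k$ and $N=2k+1$ and simplifying the numerators gives the two displayed identities. I expect the real difficulty to be not any single manipulation but the repeated interchange of $\lim_n$ with series that converge only up to --- and not strictly inside --- the critical radii $\sqrt{|\alpha_1\alpha_2|}$ and $1/\sqrt{|\alpha_1\alpha_2|}$; this is exactly where the sharp geometric control of $\Phi_n$, $\Phi_n^*$ and $R_n$ is indispensable, and it is why the annulus $\mathbb A$, rather than a disc, is the natural domain of these formulas.
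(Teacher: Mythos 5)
Your proposal is correct and takes essentially the same route as the paper: the paper's entire argument for this theorem is to observe that Lemma \ref{LemVerCerPerDos} gives (\ref{SimonCoefRef}) exactly, with $b_{1,2}=\pm\sqrt{\alpha_1\alpha_2}$ and nonzero $C_{1,2}$, and then to ``redo the calculations of \cite{Simon2}'', obtaining $\Delta_1=\sqrt{|\alpha_1\alpha_2|}$ --- which is precisely your first paragraph, and your telescoping of the Szeg\H{o} recursions, the Nevai--Totik-type continuation, and the geometric bookkeeping for $R_n$, $s_n$ and the residual formulas are just those calculations written out. One detail to watch when carrying this out: the $*$-recursion brings in $\overline{\Phi_{n+1}(0)}$, so the geometric series in your second-order substitution are in $\overline{b_j}\,z$ and place the poles at $\pm\,\overline{\sqrt{\alpha_1\alpha_2}}^{\,-1}$, which coincides with the stated $\pm 1/\sqrt{\alpha_1\alpha_2}$ exactly when $\alpha_1\alpha_2$ is real (as in the paper's examples), so keep the conjugations explicit if you want the statement in full generality.
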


\begin{rmk}
Using the above result, in \cite{Simon2}, Simon also proved that the zeros of OPUCs satisfied what he called ``clock behavior'' in the Mhaskar-Saff circle $\{z:|z|=\sqrt{|\alpha_1\alpha_2|}\}$: the zeros approach to this circle with rate $O(\frac{\log n}{n})$, the rate of magnitudes of consecutive zeros is $1+O(\frac1{n\log n})$ and their are equally spaced with only a larger gap around $\pm\sqrt{\alpha_1\alpha_2}$. See Figures \ref{GrafCerosPerDos} and \ref{GrafCerosPerDos2}. These properties of the zeros let us to speculate looking for a justification of what it is seen in the drawings when $|\alpha_|\ne|\alpha_2|$, see Figure \ref{GrafCerosPerDos2}. For example, if $|\alpha_2|>|\alpha_1|$, then $\alpha_2$ is a Nevai-Totik point, there exists a zero near this point and another one, ``{\it which is not in the gaps}'', accumulates in other point in $\{z:\sqrt{|\alpha_1\alpha_2|}<|z|<1 \}$. Therefore, the Szeg\H{o} function has two zeros outside this disk which give some equilibrium with its singularities at $\pm\sqrt{\alpha_1\alpha_2}$ (remember $D(\infty)=D(0)^{-1}\ne 0$).
\end{rmk}

\subsection{On the meromorphic extension of the Szeg\H{o} function}

In Theorem \ref{TeoRegionCritica} we have required the meromorphic extension of the interior Szeg\H{o} function to $\mathbb{A}$. This function has two poles at $\pm\sqrt{\alpha\beta}$. Using technique of Fourier-Pad\'e approximants we prove that an extension with exactly two poles only can be do it to $\mathbb{A}$. To obtain that result we use a Lemma stated in \cite{Bar-Lop-Saf}.

Let $f\in L^1(\mu)$. Its Fourier expansion with respect to the orthonormal system
$\{\varphi_n\}$ is given by
\[
f(z) \sim \sum_{j=0}^{\infty} A_{j}\varphi_{j}(z),
\]
where $ A_j$ denotes the Fourier coefficient
\[
A_j=\langle f,\varphi_i\rangle.
\]
The Fourier-Pad\'e approximant of type $(n,m)$, $n,m\in\{0,1,\ldots\}$, of $f$ is the ratio $\pi_{n,m}(f)=p_{n,m}/q_{n,m}$ of any two polynomials $p_{n,m}$ and $q_{n,m}$ such that
\begin{enumerate}
\item[(i)] $\textnormal{deg}(p_{n,m})\le n$; $\textnormal{deg}(q_{n,m})\le m$, $q_{n,m}\not\equiv 0$.
\item[(ii)] $q_{n,m}(z)f(z)- p_{n,m})(z)\sim  A_{n,1}\varphi_{n+m+1}(z)+ A_{n,2}\varphi_{n+m+2}(z) +\ldots$.
\end{enumerate}
Condition (ii) above means that
\[
\langle q_{n,m}f - p_{n,m},\varphi_j\rangle=0
\]
for $j=0,\ldots,n+m$. In the sequel, we take $q_{n,m}$ with leading coefficient equal to 1.

The existence of such polynomials reduces to solving a homogeneous linear system of $m$ equations
on the $m + 1$ coefficients of $q_{n,m}$. Thus a nontrivial solution is guaranteed. In general, the rational function $\pi_{n,m}$ is not uniquely determined, but if for every solution of (i), (ii), the polynomial $q_{n,m}$ is of degree $m$, then $\pi_{n,m}$ is unique.

For $m$ fixed, a sequence of type $\{\pi_{n,m}, n\in \mathbb{N}\}$, is called an $m$th row of the Fourier-Pad\'e approximants relative to $f$. If $f$ is such that $R_0(f)>1$ and has in $\Delta_m(f)$ exactly $m$ poles then for all sufficiently large $n\ge n_0$, $\pi_{n,m}$ is uniquely determined and so is the sequence $\{\pi_{n,m}, \, n\ge n_0\}$. Here $\Delta_m(f) = \{z: |z|<R_m(f)\}$ is the largest disk centered at $z = 0$ in which $f$ can be extended to a meromorphic function with at most $m$ poles.
This and other results for row sequences of Fourier-Pad\'e approximants may be found in \cite{Sue}, \cite{Sue2} for Fourier expansion with respect to measures supported on an interval of the real line whose
absolutely continuous part with respect to Lebesgue's measure is positive almost everywhere.

The following result is stated in \cite{Bar-Lop-Saf}.

\vspace{0.5cm}

\begin{lem}\label{LemBar-Lop-Saf} Let $\mu$ be such that $R_0= R_0(D^{-1})>1$. The following assertions are equivalent:
\begin{enumerate}
\item[(a)] $D^{-1}$ has exactly $m$ poles in $\Delta_m= \Delta_m(D^{-1})$.
\item[(b)] The sequence $\{\pi_{n,m}(D^{-1});\,n=0,1,\ldots\}$ for all sufficiently large $n$ has exactly $m$ finite poles and there exists a polynomial $w_m(z)=z^m+\ldots$ such that
\[
\limsup_n\|q_{n,m}- w_m\|^{1/n}=\delta<1,
\]
where  $\|\cdot\|$ denotes any norm in the space of polynomials of degree at most $m$.
\end{enumerate}
The poles of $D^{-1}$ coincide with the zeros $z_1,\ldots,z_m$ of $w_m$, and
\begin{equation}\label{FormulaRm}
R_m=\frac1{\delta}\max_{1\le j\le m}|z_j|.
\end{equation}
\end{lem}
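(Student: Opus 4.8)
Lemma~\ref{LemBar-Lop-Saf} is a Gonchar--Suetin (inverse de~Montessus) theorem for the $m$-th row of the Fourier--Pad\'e table of $D^{-1}$, and the plan is to transcribe to the unit circle the interval arguments of Suetin quoted in \cite{Sue},\cite{Sue2}. The analytic engine used throughout is a Cauchy--Hadamard dictionary for Fourier coefficients with respect to $\{\varphi_n\}$: since $R_0(D^{-1})>1$ forces $\mu$ into the Szeg\H o class, $\kappa_n\to\kappa=D(0)^{-1}\in(0,\infty)$ and $\varphi_n^*\to D^{-1}$ in $L^2(\mu)$, from which one gets (i) $\limsup_j|\langle h,\varphi_j\rangle|^{1/j}=1/\rho$ whenever $\{|z|<\rho\}$ is the largest disk into which $h\in L^2(\mu)$ extends holomorphically (and the meromorphic version after multiplying by the pole factors), and (ii) for $|a|>1$, $\langle z^k/(z-a),\varphi_n\rangle=-a^{k-n-1}\bigl(\overline{D(1/\bar a)}+o(1)\bigr)$, a quantity of exact order $|a|^{-n}$. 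I would establish these two facts first, since they reduce everything to linear algebra plus bookkeeping.

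\textbf{(a)$\Rightarrow$(b).} Write $D^{-1}=g/w_m$ with $w_m(z)=\prod_{l=1}^m(z-z_l)$ monic, the $z_l$ the $m$ poles of $D^{-1}$ in $\Delta_m$ (so $1<|z_l|<R_m$, since $D^{-1}$ is holomorphic in $\{|z|\le1\}$), and $g$ holomorphic in $\Delta_m$ with $g(z_l)\ne0$ and a genuine singularity on $|z|=R_m$, whence $\limsup_j|\langle g,\varphi_j\rangle|^{1/j}=1/R_m$. From condition (ii) in the definition of $\pi_{n,m}$ and $\deg p_{n,m}\le n$ one gets $\langle q_{n,m}D^{-1},\varphi_{n+i}\rangle=0$ for $i=1,\dots,m$; writing $q_{n,m}=w_m+\sum_{k=0}^{m-1}\gamma_{n,k}z^k$ and using $\langle w_mD^{-1},\varphi_j\rangle=\langle g,\varphi_j\rangle$ this becomes the $m\times m$ linear system
\[
\sum_{k=0}^{m-1}\gamma_{n,k}\,\langle z^kD^{-1},\varphi_{n+i}\rangle=-\langle g,\varphi_{n+i}\rangle,\qquad i=1,\dots,m .
\]
By the partial-fraction expansion $D^{-1}=\sum_l A_l/(z-z_l)+(\text{holomorphic in }\Delta_m)$, $A_l\ne0$, and (ii) above, the coefficient matrix equals $V_1\Lambda_nV_2+o(\text{leading term})$ with $(V_1)_{i,l}=z_l^{-i}$, $(V_2)_{l,k}=z_l^{k}$ nonsingular Vandermonde matrices and $\Lambda_n$ diagonal with entries of exact order $|z_l|^{-n}$; hence the system has a unique solution for $n$ large, and Cramer's rule gives $\|q_{n,m}-w_m\|\le C(\max_l|z_l|)^{n}\max_i|\langle g,\varphi_{n+i}\rangle|$, so $\limsup_n\|q_{n,m}-w_m\|^{1/n}\le(\max_l|z_l|)/R_m<1$. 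For the reverse inequality, hence (\ref{FormulaRm}), I would return to $\gamma_n=(V_2^{-1}\Lambda_n^{-1}V_1^{-1}+o(1))b_n$ and show the component of $\Lambda_n^{-1}V_1^{-1}b_n$ at the index $l^*$ with $|z_{l^*}|=\max_l|z_l|$ has exact order $(|z_{l^*}|/R_m)^n$; finally $q_{n,m}\to w_m$ plus Hurwitz's theorem identifies the poles of $\pi_{n,m}$ with the $z_l$.

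\textbf{(b)$\Rightarrow$(a).} Set $g:=w_mD^{-1}$, holomorphic a priori only in $\{|z|<R_0\}$. For $j$ large take $n=j-1$, so $\langle q_{n,m}D^{-1},\varphi_j\rangle=0$ and therefore $\langle g,\varphi_j\rangle=\sum_{k=0}^{m-1}(w_m-q_{n,m})_k\langle z^kD^{-1},\varphi_j\rangle$; bounding $|(w_m-q_{n,m})_k|\le C\delta'^{\,j}$ for any $\delta'>\delta$ and, if $\sigma$ is the current radius of holomorphy of $g$ (so $D^{-1}=g/w_m$ has in $\{|z|<\sigma\}$ only the finitely many poles among the $z_l$), $|\langle z^kD^{-1},\varphi_j\rangle|\le C\tau^{\,j}$ with $\tau=\max\bigl(\{1/\sigma\}\cup\{1/|z_l|:|z_l|<\sigma\}\bigr)$, one obtains $\limsup_j|\langle g,\varphi_j\rangle|^{1/j}\le\delta\tau$, i.e.\ $g$ extends holomorphically to the strictly larger disk $\{|z|<1/(\delta\tau)\}$. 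Iterating (the exponent $\delta$ never degrades, since the new pole factors are absorbed into the fixed $w_m$), $g$ reaches the largest disk $\{|z|<R\}$ of holomorphy, necessarily singular on $|z|=R$, and the bookkeeping gives $R=\tfrac1\delta\max_l|z_l|$; thus $D^{-1}=g/w_m$ is meromorphic in $\{|z|<R\}$ with poles exactly at the $z_l$ (none is removed, by the minimality built into the definition of $\delta$), which is (a) together with (\ref{FormulaRm}).

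\textbf{Main obstacle.} The genuinely delicate point is the \emph{sharp}, two-sided rate in (a)$\Rightarrow$(b): upgrading $\limsup_n\|q_{n,m}-w_m\|^{1/n}\le(\max_l|z_l|)/R_m$ to an equality. The inequality is soft (Cramer's rule plus Vandermonde non-degeneracy), but the matching lower bound forces one to exclude accidental cancellation of the dominant scale $(|z_{l^*}|/R_m)^n$ in the particular combination of the $\langle g,\varphi_{n+i}\rangle$ produced by $V_1^{-1}$ --- exactly the step where the maximality of $R_m$, that is, the presence of a true singularity of $w_mD^{-1}$ on $|z|=R_m$, has to be used quantitatively; keeping the zeros of $w_m$ from overtaking the growing radius of meromorphy in the bootstrap of (b)$\Rightarrow$(a) requires comparable care.
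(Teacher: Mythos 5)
A preliminary remark on the comparison: the paper itself contains no proof of this lemma --- the sentence immediately preceding it reads ``The following result is stated in \cite{Bar-Lop-Saf}'', so the result is quoted from Barrios Rolan\'{\i}a--L\'opez Lagomasino--Saff and your proposal can only be measured against that literature proof (Suetin/Gonchar-type direct and inverse theorems for row sequences, transplanted to Fourier--Pad\'e approximants), which is indeed the framework you sketch. Within that framework the soft half is fine: the orthogonality relations $\langle q_{n,m}D^{-1},\varphi_{n+i}\rangle=0$, the linear system for $q_{n,m}-w_m$, and the upper bound $\limsup_n\|q_{n,m}-w_m\|^{1/n}\le \max_l|z_l|/R_m$ via Cramer's rule and the Vandermonde structure are all standard and correctly set up.

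There are, however, genuine gaps. In (a)$\Rightarrow$(b) you concede yourself that the matching lower bound (exclusion of cancellation of the dominant scale) is not proved; without it you obtain neither the stated value of $\delta$ nor (\ref{FormulaRm}), and this sharpness is the heart of the theorem, not a refinement. More seriously, the bootstrap in (b)$\Rightarrow$(a) does not deliver what you claim: as soon as the continuation of $g=w_mD^{-1}$ has passed the modulus of the zero of $w_m$ closest to the unit circle, your bound $|\langle z^kD^{-1},\varphi_j\rangle|\le C\tau^j$ has $\tau$ frozen at $1/\min_l|z_l|$, so the iteration stalls at the radius $\min_l|z_l|/\delta$ rather than reaching $\max_l|z_l|/\delta$; the parenthetical ``the exponent $\delta$ never degrades'' hides exactly this failure. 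To go beyond the nearest pole one must exploit all $m$ relations per index (equivalently, combine the relations for several consecutive $n$ so as to multiply by polynomial factors vanishing at the already-detected poles and cancel their dominant contribution), which is the technical core of Suetin's inverse argument. Likewise, the assertion that every zero of $w_m$ is a genuine pole of $D^{-1}$ (``none is removed, by the minimality built into $\delta$'') is precisely a Gonchar-type non-cancellation statement that has to be proved, not invoked; as written, (b)$\Rightarrow$(a) establishes only meromorphy with at most $m$ poles in a possibly smaller disk, not assertion (a) together with (\ref{FormulaRm}).
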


The lemma above lets us to prove the following result.

\begin{teo}\label{TeoFormulaRmD-1} If the sequence of zeros $\{z_n\}$ satisfies (\ref{cerPerDos}), then
\[
R_2(D^{-1})=\frac1{|\alpha\beta|}.
\]
\end{teo}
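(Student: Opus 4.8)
The plan is to apply Lemma~\ref{LemBar-Lop-Saf} with $m=2$, so that $R_2(D^{-1})$ is computed from the formula \eqref{FormulaRm}, $R_2=\frac1\delta\max_{1\le j\le 2}|z_j|$, where $z_1,z_2$ are the zeros of the limit denominator polynomial $w_2$ and $\delta=\limsup_n\|q_{n,2}-w_2\|^{1/n}$. Thus the task splits into two parts: first identify $w_2$ (equivalently the two poles of $D^{-1}$), and second compute the geometric rate $\delta$ at which the Fourier--Pad\'e denominators $q_{n,2}(D^{-1})$ converge to $w_2$.

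For the first part I would invoke Theorem~\ref{TeoRegionCritica}: under \eqref{cerPerDos} the interior Szeg\H{o} function $D_{int}^{-1}$ has exactly the two poles $\pm\frac1{\sqrt{\alpha_1\alpha_2}}$, so $w_2(z)=\bigl(z-\tfrac1{\sqrt{\alpha_1\alpha_2}}\bigr)\bigl(z+\tfrac1{\sqrt{\alpha_1\alpha_2}}\bigr)=z^2-\tfrac1{\alpha_1\alpha_2}$ and $\max\{|z_1|,|z_2|\}=\frac1{\sqrt{|\alpha_1\alpha_2|}}$. It then remains to show $\delta=\sqrt{|\alpha_1\alpha_2|}$, which would give $R_2=\frac1{\sqrt{|\alpha_1\alpha_2|}}\cdot\frac1{\sqrt{|\alpha_1\alpha_2|}}=\frac1{|\alpha_1\alpha_2|}$ as claimed, and simultaneously verifies hypothesis (b) of Lemma~\ref{LemBar-Lop-Saf}, hence establishes that $D^{-1}$ genuinely has exactly two poles in $\Delta_2$ (so that $R_2(D^{-1})$ is indeed the radius of meromorphy with at most two poles, not merely an upper bound coming from $\mathbb{A}$).

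To pin down $\delta$ I would write down the linear system defining $q_{n,2}$ explicitly. With $q_{n,2}(z)=z^2+c_{n,1}z+c_{n,0}$, condition (ii) says $\langle q_{n,2}D^{-1}-p_{n,2},\varphi_j\rangle=0$ for $j\le n+2$; eliminating $p_{n,2}$ (which only affects $j\le n$) leaves the two equations $\langle z^2 D^{-1},\varphi_{n+1}\rangle+c_{n,1}\langle zD^{-1},\varphi_{n+1}\rangle+c_{n,0}\langle D^{-1},\varphi_{n+1}\rangle=0$ and the analogous one with $\varphi_{n+2}$. Now the Fourier coefficients $\langle z^kD^{-1},\varphi_{n+j}\rangle$ are controlled by the asymptotics of $\varphi_n$: using Corollary~\ref{CorAsinDentro} (or directly \eqref{coefRecDos2} together with $\langle \Phi_n^*, z^k\rangle$-type identities and $\Phi_n(0)=-\langle \Phi_n^*,1\rangle$ up to normalization), each such coefficient behaves like a constant times $(\alpha_1\alpha_2)^{\lfloor n/2\rfloor}$ with a correction of strictly smaller exponential order governed by the parameter $\Delta_1=\sqrt{|\alpha_1\alpha_2|}$ from Theorem~\ref{TeoRegionCritica}. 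Solving the $2\times2$ Cramer system and comparing the leading $(\alpha_1\alpha_2)$-powers, the leading terms must cancel against $w_2$ (this forces $w_2=z^2-\tfrac1{\alpha_1\alpha_2}$ again, consistently), and the residual is of order $(\sqrt{|\alpha_1\alpha_2|})^{\,n}$, giving $\delta=\sqrt{|\alpha_1\alpha_2|}$.

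I expect the main obstacle to be the bookkeeping in that last step: one must show that the subleading corrections to the relevant Fourier coefficients really do decay at the rate $\sqrt{|\alpha_1\alpha_2|}$ and not faster or slower, and that the $2\times 2$ determinant in Cramer's rule does not vanish to higher order (which would spoil the rate). This is exactly where the refined expansion of $\Phi_n^*$ from Theorem~\ref{TeoRegionCritica}—the functions $R_n$ and $s_n$ with their sharp bounds $\max_{|z|\le1}|R_n(z)|\le C(|\alpha_1\alpha_2|+\epsilon)^n$—is needed, together with the explicit two-term formulas for $\Phi_{2k}$ and $\Phi_{2k+1}$ in $\mathbb{A}$; plugging these into the inner products and using orthogonality to kill the $D(0)D(z)^{-1}$ main term isolates the $s_n$ contribution, whose size is precisely $(|\alpha_1\alpha_2|+\epsilon)^n$. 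A secondary point to check is that $R_0(D^{-1})>1$, i.e. that Lemma~\ref{LemBar-Lop-Saf} applies at all; but this is immediate since $\limsup_n|\Phi_n(0)|^{1/n}=\sqrt{|\alpha_1\alpha_2|}<1$ by \eqref{coefRecDos2}, so $D^{-1}$ already extends analytically past $\Gamma$.
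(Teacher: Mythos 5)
Your proposal is essentially the paper's own proof: apply Lemma~\ref{LemBar-Lop-Saf} with $m=2$, determine $q_{n,2}$ from the two conditions $\langle q_{n,2}D_{int}^{-1},\varphi_{n+1}\rangle=\langle q_{n,2}D_{int}^{-1},\varphi_{n+2}\rangle=0$ by Cramer's rule, show convergence of $q_{n,2}$ to the quadratic limit polynomial at the exact geometric rate $\sqrt{|\alpha_1\alpha_2|}$, and read off $R_2=1/|\alpha_1\alpha_2|$ from (\ref{FormulaRm}) (the paper's computation produces $z^2-\frac1{\overline{\alpha_1\alpha_2}}$ rather than $z^2-\frac1{\alpha_1\alpha_2}$, which changes nothing since only the moduli of its zeros enter). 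The only tactical differences are that the paper evaluates the inner products $\langle z^kD_{int}^{-1},\varphi_m\rangle$ exactly from the recurrence and the expansion $D_{int}^{-1}=\kappa^{-1}\sum_j\overline{\varphi_j(0)}\varphi_j$ (Lemma~\ref{LemaFormulasCoefFourierFuncionSzegoZfuncSzego}) instead of from the strong asymptotics of Corollary~\ref{CorAsinDentro} and Theorem~\ref{TeoRegionCritica}, and that its opening step---showing no monic degree-one denominator satisfies both conditions for large $n$, equivalently that your Cramer determinant stays away from zero, so that $\pi_{n,2}$ really has exactly two finite poles---is precisely the bookkeeping you flagged but deferred.
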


To prove the theorem above we need some calculations. Using recurrence relation (\ref{recurrencia}) it is easy to prove the following lemma.

\begin{lem}
\[
\langle z\varphi_{j},1\rangle=-\frac{\varphi_{j+1}(0)}{\kappa_j\kappa_{j+1}}.
\]

\[
\langle z\varphi_{j},\varphi_{m}\rangle=\left\{\begin{array}{lr}0,&\textnormal{ si } j<m-1,\\ \frac{\kappa_j}{\kappa_{j+1}},&\textnormal{ si }j=m-1,\\ -\frac{\varphi_{j+1}(0)\overline{\varphi_{m}(0)}}{\kappa_j\kappa_{j+1}},&\textnormal{ si }j> m-1.\end{array}\right.
\]
For $j<m-2$,
\[
\langle z^2\varphi_{j},\varphi_{m}\rangle=0,
\]
and
\[
\langle z^2\varphi_{m-2},\varphi_{m}\rangle=\frac{\kappa_{m-2}}{\kappa_m}.
\]
Moreover,
\[
\langle z^2\varphi_{m-1},\varphi_{m}\rangle= -\frac{\overline{\varphi_{m}(0)}}{\kappa_m^2}\left(\frac{\kappa_{m-1}}{\kappa_{m+1}}
\varphi_{m+1}(0)+\frac{\varphi_{m}(0)\overline{\varphi_{m-1}(0)}}{\overline{\varphi_{m}(0)}} \right).
\]
If $j\ge m$,
\begin{multline*}
\langle z^2\varphi_{j},\varphi_{m}\rangle= -\frac{\Phi_{j+1}(0)\overline{\varphi_{m}(0)}}{\kappa_j}\times\\ \times \left(  \frac{\overline{\Phi_{m-1}(0)}}{\overline{\Phi_{m}(0)}}+\frac{\Phi_{j+2}(0)}{\Phi_{j+1}(0)} -\sum_{l=m-1}^{j+1} \overline{\Phi_l(0)}\Phi_{l+1}(0)\right).
\end{multline*}
\end{lem}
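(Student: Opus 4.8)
The plan is to reduce every identity in the lemma to the Szeg\H{o} recurrence (\ref{recurrencia}) together with two elementary observations. Write $\varphi_j^{*}=\kappa_j\Phi_j^{*}$ for the reversed polynomial of $\varphi_j$, so that $\overline{\varphi_j^{*}(z)}=\bar z^{\,j}\varphi_j(z)$ on $|z|=1$. Dividing (\ref{recurrencia}) by $\kappa_{n+1}$ and using $\Phi_n=\varphi_n/\kappa_n$, $\Phi_n^{*}=\varphi_n^{*}/\kappa_n$ yields the lowering relation $z\varphi_j=\frac{\kappa_j}{\kappa_{j+1}}\varphi_{j+1}-\Phi_{j+1}(0)\varphi_j^{*}$ for $j\ge 0$. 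Changing the power of $z$ inside the integral shows $\langle z^{k},\varphi_j^{*}\rangle=\langle\varphi_j,z^{\,j-k}\rangle$, hence $\langle\varphi_j^{*},z^{k}\rangle=0$ for $1\le k\le j$ and $\langle\varphi_j^{*},1\rangle=\langle z^{\,j},\varphi_j\rangle=1/\kappa_j$; consequently $\langle\varphi_j^{*},q\rangle=\overline{q(0)}/\kappa_j$ for every polynomial $q$ with $\deg q\le j$, which in turn gives $\varphi_j^{*}=\kappa_j^{-1}\sum_{l=0}^{j}\overline{\varphi_l(0)}\varphi_l$ and $\langle\varphi_j^{*},\varphi_m\rangle=\overline{\varphi_m(0)}/\kappa_j$ whenever $m\le j$.

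With these in hand the first display and the $\langle z\varphi_j,\varphi_m\rangle$ cases follow by pairing the lowering relation with $\varphi_m$: if $m\le j$ the $\varphi_{j+1}$-term drops and $\langle z\varphi_j,\varphi_m\rangle=-\Phi_{j+1}(0)\overline{\varphi_m(0)}/\kappa_j=-\varphi_{j+1}(0)\overline{\varphi_m(0)}/(\kappa_j\kappa_{j+1})$ (take $m=0$ and $\varphi_0\equiv1$ for the first display); if $m=j+1$ only the $\varphi_{j+1}$-term survives, giving $\kappa_j/\kappa_{j+1}$, i.e.\ $\kappa_{m-1}/\kappa_m$; and if $m>j+1$ then $z\varphi_j$ has degree $j+1<m$, so the pairing is $0$. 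The same degree count gives $\langle z^{2}\varphi_j,\varphi_m\rangle=0$ for $j<m-2$, and comparing leading coefficients of $z^{2}\varphi_{m-2}$ and $\varphi_m$ gives $\langle z^{2}\varphi_{m-2},\varphi_m\rangle=\kappa_{m-2}/\kappa_m$.

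For the two remaining displays (the cases $j=m-1$ and $j\ge m$) I would iterate: multiplying the lowering relation once more by $z$ gives $z^{2}\varphi_j=\frac{\kappa_j}{\kappa_{j+1}}z\varphi_{j+1}-\Phi_{j+1}(0)z\varphi_j^{*}$, hence $\langle z^{2}\varphi_j,\varphi_m\rangle=\frac{\kappa_j}{\kappa_{j+1}}\langle z\varphi_{j+1},\varphi_m\rangle-\Phi_{j+1}(0)\langle z\varphi_j^{*},\varphi_m\rangle$; the first term is covered by the formulas just proved, while for the second I would expand $\varphi_j^{*}=\kappa_j^{-1}\sum_{l=0}^{j}\overline{\varphi_l(0)}\varphi_l$ and reuse them again, $\langle z\varphi_j^{*},\varphi_m\rangle=\kappa_j^{-1}\sum_{l=0}^{j}\overline{\varphi_l(0)}\langle z\varphi_l,\varphi_m\rangle$, where only $l=m-1$ (contributing $\kappa_{m-1}/\kappa_m$) and $l\ge m$ (contributing $-\varphi_{l+1}(0)\overline{\varphi_m(0)}/(\kappa_l\kappa_{l+1})$, with $\overline{\varphi_l(0)}\varphi_{l+1}(0)/(\kappa_l\kappa_{l+1})=\overline{\Phi_l(0)}\Phi_{l+1}(0)$) survive. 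Adding the three resulting pieces and using the Szeg\H{o} relation $1-|\Phi_l(0)|^{2}=\kappa_{l-1}^{2}/\kappa_l^{2}$ to absorb the two ``boundary'' contributions — turning $\frac{\Phi_{j+2}(0)}{\Phi_{j+1}(0)}-\overline{\Phi_{j+1}(0)}\Phi_{j+2}(0)$ into $\frac{\kappa_j^{2}}{\kappa_{j+1}^{2}}\frac{\Phi_{j+2}(0)}{\Phi_{j+1}(0)}$, and likewise recasting the $l=m-1$ contribution as the term $\overline{\Phi_{m-1}(0)}/\overline{\Phi_m(0)}$ — rewrites the sum so that it runs from $l=m-1$ to $l=j+1$, which is exactly the stated formula for $j\ge m$; putting $j=m-1$, where now $\langle z\varphi_{j+1},\varphi_m\rangle=\langle z\varphi_m,\varphi_m\rangle=-\varphi_{m+1}(0)\overline{\varphi_m(0)}/(\kappa_m\kappa_{m+1})$ and the sum over $l$ collapses to its single term $l=m-1$, produces the displayed expression for $\langle z^{2}\varphi_{m-1},\varphi_m\rangle$. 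The main obstacle will be precisely this last repackaging of the triple sum: it is routine but delicate, since one must keep careful track of all the factors $\kappa_{l-1}/\kappa_l$ in order to land on the compact form with the two correction terms $\overline{\Phi_{m-1}(0)}/\overline{\Phi_m(0)}$ and $\Phi_{j+2}(0)/\Phi_{j+1}(0)$ rather than on an equivalent but messier expression.
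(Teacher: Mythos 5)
Your proposal is correct: the paper offers no written proof of this lemma (it only remarks that it follows from the recurrence (\ref{recurrencia})), and your argument --- the lowering relation $z\varphi_j=\frac{\kappa_j}{\kappa_{j+1}}\varphi_{j+1}-\Phi_{j+1}(0)\varphi_j^{*}$, the evaluation $\langle\varphi_j^{*},\varphi_m\rangle=\overline{\varphi_m(0)}/\kappa_j$ for $m\le j$ with the expansion $\varphi_j^{*}=\kappa_j^{-1}\sum_{l=0}^{j}\overline{\varphi_l(0)}\varphi_l$, and the relation $1-|\Phi_l(0)|^{2}=\kappa_{l-1}^{2}/\kappa_l^{2}$ to absorb the two boundary terms --- is exactly that intended computation carried out in detail. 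All the intermediate identities you state check out, including the final repackaging that extends the sum to run from $l=m-1$ to $l=j+1$ in the case $j\ge m$ and the collapse to the single term $l=m-1$ in the case $j=m-1$.
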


It is known for measures on the Szeg\H{o} class ($\log\mu'\in L^1$) we have
\[
D^{-1}(z)=\frac1{\kappa}\sum_{j=0}^{\infty}\overline{\varphi_j(0)}  \varphi_j(z),\quad z\in\mathbb{D},
\]
see \cite{Ger}, p. 19; \cite{NevTot} Theorem 1; \cite{MhaSaf} Theorem 2.2; \cite{Bar-Lop-Saf}, p. 174. If (\ref{cerPerDos}) holds, the expansion above converges uniformly on compact subsets of $\{z:|z|<\frac1{\sqrt{|\alpha_1\alpha_2|}}\}$ and
\begin{equation}\label{DesFuncSze}
D_{int}^{-1}(z)=\frac1{\kappa}\sum_{j=0}^{\infty}\overline{\varphi_j(0)}  \varphi_j(z),\quad z\in\{z:|z|<\frac1{\sqrt{|\alpha_1\alpha_2|}}\}.
\end{equation}
Then, using the Lemma above and (\ref{DesFuncSze}), we obtain:

\begin{lem}\label{LemaFormulasCoefFourierFuncionSzegoZfuncSzego}
\[
\langle D_{int}^{-1},\varphi_{m}\rangle=\frac{\overline{\varphi_{m}(0)}}{\kappa},
\]
\[
\langle z D_{int}^{-1},\varphi_{m}\rangle=\frac{\overline{\varphi_{m}(0)}}{\kappa }\left( \frac{\overline{\Phi_{m-1}(0)}}{\overline{\Phi_{m}(0)}} - \sum_{j=m-1}^{\infty}\overline{\Phi_j(0)}\Phi_{j+1}(0)\right),
\]
\[
\langle z^2 D_{int}^{-1},\varphi_{m}\rangle=  \frac{\overline{\varphi_{m}(0)}}{\kappa}\left( \frac{\overline{\Phi_{m-2}(0)}}{\overline{\Phi_{m}(0)}}+O( \varphi_{m-1}(0))\right).
\]

\end{lem}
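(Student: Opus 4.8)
The plan is to substitute the series expansion (\ref{DesFuncSze}) for $D_{int}^{-1}$ into each of the three inner products, integrate term by term, and then simplify by means of the lemma above together with the elementary identities $\varphi_j(0)=\kappa_j\Phi_j(0)$ and $\kappa_{m-1}^2/\kappa_m^2=1-|\Phi_m(0)|^2$. Since (\ref{DesFuncSze}) converges uniformly on the compact subsets of $\{z:|z|<1/\sqrt{|\alpha_1\alpha_2|}\}$, in particular uniformly on $\Gamma$, term-by-term integration is legitimate and
\[
\langle z^k D_{int}^{-1},\varphi_m\rangle=\frac1{\kappa}\sum_{j=0}^{\infty}\overline{\varphi_j(0)}\,\langle z^k\varphi_j,\varphi_m\rangle,\qquad k=0,1,2.
\]
For $k=0$ only the summand $j=m$ survives, which gives $\overline{\varphi_m(0)}/\kappa$ at once.

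For $k=1$, by the lemma above only the summands with $j\ge m-1$ are nonzero. The summand $j=m-1$ equals $\kappa_{m-1}^2\overline{\Phi_{m-1}(0)}/(\kappa\kappa_m)$, and, since $\overline{\varphi_j(0)}\varphi_{j+1}(0)/(\kappa_j\kappa_{j+1})=\overline{\Phi_j(0)}\Phi_{j+1}(0)$, the summands with $j\ge m$ add up to $-(\overline{\varphi_m(0)}/\kappa)\sum_{j\ge m}\overline{\Phi_j(0)}\Phi_{j+1}(0)$. I would then rewrite the $j=m-1$ summand as
\[
\frac{\kappa_{m-1}^2}{\kappa\kappa_m}\,\overline{\Phi_{m-1}(0)}=\frac{\overline{\varphi_m(0)}}{\kappa}\bigl(1-|\Phi_m(0)|^2\bigr)\frac{\overline{\Phi_{m-1}(0)}}{\overline{\Phi_m(0)}}=\frac{\overline{\varphi_m(0)}}{\kappa}\Bigl(\frac{\overline{\Phi_{m-1}(0)}}{\overline{\Phi_m(0)}}-\overline{\Phi_{m-1}(0)}\Phi_m(0)\Bigr),
\]
and observe that the last term is exactly the missing $l=m-1$ term of $\sum_l\overline{\Phi_l(0)}\Phi_{l+1}(0)$; adding it to the contribution of the summands $j\ge m$ completes the series to $\sum_{j\ge m-1}$ and yields the asserted formula.

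For $k=2$ the lemma above leaves the summands with $j\ge m-2$. The summand $j=m-2$ equals $\kappa_{m-2}^2\overline{\Phi_{m-2}(0)}/(\kappa\kappa_m)$; using $\kappa_{m-2}^2/\kappa_m^2=(1-|\Phi_{m-1}(0)|^2)(1-|\Phi_m(0)|^2)$ and that, under (\ref{cerPerDos}), Lemma \ref{LemVerCerPerDos} yields $\Phi_m(0)=-z_m\Phi_{m-1}(0)=O(\varphi_{m-1}(0))$, this summand equals $(\overline{\varphi_m(0)}/\kappa)\,\overline{\Phi_{m-2}(0)}/\overline{\Phi_m(0)}+O(\varphi_{m-1}(0))$. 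It then remains to show that the other contributions are $O(\varphi_{m-1}(0))$. The summand $j=m-1$ carries the explicit factor $\overline{\varphi_{m-1}(0)}$, the remaining factor being $o(1)$. For $j\ge m$ the last formula of the lemma above gives $\overline{\varphi_j(0)}\langle z^2\varphi_j,\varphi_m\rangle=O(|\Phi_j(0)|\,|\Phi_{j+1}(0)|\,|\Phi_m(0)|)$, since the ratios $\overline{\Phi_{m-1}(0)}/\overline{\Phi_m(0)}=-1/\overline{z_m}$ and $\Phi_{j+2}(0)/\Phi_{j+1}(0)=-z_{j+2}$ are bounded and the partial sums $\sum_{l=m-1}^{j+1}\overline{\Phi_l(0)}\Phi_{l+1}(0)$ are uniformly bounded; by the geometric decay of $\Phi_j(0)$ coming from (\ref{coefRecDos2}) this gives $\frac1\kappa\sum_{j\ge m}\overline{\varphi_j(0)}\langle z^2\varphi_j,\varphi_m\rangle=O\bigl(|\Phi_m(0)|\sum_{j\ge m}|\Phi_j(0)|\,|\Phi_{j+1}(0)|\bigr)=O(|\Phi_m(0)|^3)=O(\varphi_{m-1}(0))$.

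The hardest part is the bookkeeping for $k=2$: isolating $j=m-2$ as the single surviving main term while all the remaining ones collapse into the $O(\varphi_{m-1}(0))$ remainder. This is precisely where the period-two structure is essential --- through Lemma \ref{LemVerCerPerDos} and (\ref{coefRecDos2}) --- both to turn $\kappa_{m-2}^2/\kappa_m^2$ into $1+O(\varphi_{m-1}(0))$ and $\Phi_m(0)$ into $O(\varphi_{m-1}(0))$, and to control the infinite tail $\sum_{j\ge m}$ via the geometric decay of the Verblunsky coefficients. The cases $k=0,1$, by contrast, are purely formal manipulations.
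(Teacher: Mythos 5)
Your proposal is correct and is exactly the argument the paper intends: the paper states this lemma as an immediate consequence of the preceding inner-product lemma together with the expansion (\ref{DesFuncSze}), which is precisely the term-by-term substitution you carry out (your $k=0,1$ computations match, including completing the sum with the $l=m-1$ term via $\kappa_{m-1}^2/\kappa_m^2=1-|\Phi_m(0)|^2$). One small point: in the $z^2$ case the lemma's remainder sits inside the parenthesis, i.e.\ it is $\frac{\overline{\varphi_m(0)}}{\kappa}\,O(\varphi_{m-1}(0))$, and although your summary line only claims an additive $O(\varphi_{m-1}(0))$, the bounds you actually establish (the $j=m-2$ correction of size $O(|\Phi_{m-1}(0)|^2)$ relative to the main term, the explicit factor $\overline{\varphi_m(0)}$ in the $j=m-1$ inner product, and the tail estimate $O(|\Phi_m(0)|^3)$) do yield the stronger stated form.
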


\begin{proof}{ of Theorem \ref{TeoFormulaRmD-1}.} Let us check first that there is not a monic polynomials of degree $1$ such that $q_{n,2}(z)=z-\tau_n$; i.e.,
\[
\langle q_{n,2}D_{int}^{-1},\varphi_{n+1}\rangle=\langle q_{n,2}D^{-1},\varphi_{n+2}\rangle=0.
\]
As $\langle q_{n,2}D_{int}^{-1},\varphi_{n+1}\rangle=0$, we have
\begin{equation}\label{EquationAux1}
\tau_n=\frac{\overline{\Phi_n(0)}}{\overline{\Phi_{n+1}(0)}}-\sum_{j=n}^{\infty}\overline{\Phi_j(0)}\Phi_{j+1}(0)
\end{equation}
Since also $\langle q_{n,2}D_{int}^{-1},\varphi_{n+2}\rangle=0$, we obtain
\begin{equation}\label{EquationAux2}
\langle zD_{int}^{-1},\varphi_{n+2}\rangle=\tau_n\langle D_{int}^{-1},\varphi_{n+2}\rangle
\end{equation}
From Lemma \ref{LemaFormulasCoefFourierFuncionSzegoZfuncSzego}, (\ref{EquationAux1}) and (\ref{EquationAux2}), we get
\[
\frac{\overline{\Phi_{n+1}(0)}}{\overline{\Phi_{n+2}(0)}}= \frac{\overline{\Phi_{n}(0)}}{\overline{\Phi_{n+1}(0)}} - \overline{\Phi_n(0)}\Phi_{n+1}(0)
\]
Since $\{\frac{\overline{\Phi_{n+1}(0)}}{\overline{\Phi_{n+2}(0)}}, \frac{\overline{\Phi_{n}(0)}}{\overline{\Phi_{n+1}(0)}}\}=\{-1/\overline{\alpha_1},-1/\overline{\alpha_2}\}$, $\alpha_1\neq\alpha_2$, and $\lim_{n}\overline{\Phi_n(0)}\Phi_{n+1}(0)=0$, the above relation is imposible for $n$ large enough.

Thus, the denominators, $q_{n,2}$, of Fourier-Pad\'e approximants of order $(n,2)$ are exactly of degree $2$ for $n$ large enough.

Let $q_{n,2}(z)=(z-\beta_n)(z-\tau_n)=z^2-(\beta_n+\tau_n)z+\beta_n\tau_n$. It satisfies
\[
\langle q_{n,2}D_{int}^{-1},\varphi_{n+1}\rangle=\langle q_{n,2}D_{int}^{-1},\varphi_{n+2}\rangle=0.
\]
Thus,
\[
(\beta_n+\tau_n)\langle z D_{int}^{-1},\varphi_{n+1}\rangle-\beta_n\tau_n \langle D_{int}^{-1},\varphi_{n+1}\rangle=\langle z^2D_{int}^{-1},\varphi_{n+1}\rangle,
\]
\[
(\beta_n+\tau_n)\langle z D_{int}^{-1},\varphi_{n+2}\rangle-\beta_n\tau_n \langle D_{int}^{-1},\varphi_{n+2}\rangle=\langle z^2D_{int}^{-1},\varphi_{n+2}\rangle.
\]
Hence,
\[
\beta_n+\tau_n=\frac{\left|\begin{array}{cc}\langle z^2 D_{int}^{-1},\varphi_{n+1}\rangle& -\langle D_{int}^{-1},\varphi_{n+1}\rangle\\ \langle z^2 D_{int}^{-1},\varphi_{n+2}\rangle& -\langle D_{int}^{-1},\varphi_{n+2}\rangle  \end{array}\right|}{\left|\begin{array}{cc}\langle z D_{int}^{-1},\varphi_{n+1}\rangle& -\langle D_{int}^{-1},\varphi_{n+1}\rangle\\ \langle z D_{int}^{-1},\varphi_{n+2}\rangle& -\langle D_{int}^{-1},\varphi_{n+2}\rangle  \end{array}\right|},
\]
\[
\beta_n\tau_n=\frac{\left|\begin{array}{cc}\langle z D_{int}^{-1},\varphi_{n+1}\rangle& \langle z^2 D_{int}^{-1},\varphi_{n+1}\rangle\\ \langle z D_{int}^{-1},\varphi_{n+2}\rangle& \langle z^2 D_{int}^{-1},\varphi_{n+2}\rangle \end{array}\right|}{\left|\begin{array}{cc}\langle z D_{int}^{-1},\varphi_{n+1}\rangle& -\langle D_{int}^{-1},\varphi_{n+1}\rangle\\ \langle z D_{int}^{-1},\varphi_{n+2}\rangle& -\langle D_{int}^{-1},\varphi_{n+2}\rangle  \end{array}\right|}.
\]
We have
\begin{multline}\label{EquationAux3}
\left|\begin{array}{cc}\langle z D_{int}^{-1},\varphi_{n+1}\rangle& -\langle D_{int}^{-1},\varphi_{n+1}\rangle\\ \langle z D_{int}^{-1},\varphi_{n+2}\rangle& -\langle D_{int}^{-1},\varphi_{n+2}\rangle  \end{array}\right|\\=
\frac{\overline{\varphi_{n+1}(0)\varphi_{n+2}(0)}}{\kappa^2 }\left(\frac{\overline{\Phi_{n+1}(0)}}{\overline{\Phi_{n+2}(0)}} -\frac{\overline{\Phi_{n}(0)}}{\overline{\Phi_{n+1}(0)}}+ \overline{\Phi_n(0)}\Phi_{n+1}(0) \right).
\end{multline}
Using, lemma above, we obtain there exists  $C\ne 0$ such that
\begin{equation}\label{EquationAux4}
\left|\begin{array}{cc}\langle z^2 D_{int}^{-1},\varphi_{n+1}\rangle& -\langle D_{int}^{-1},\varphi_{n+1}\rangle\\ \langle z^2 D_{int}^{-1},\varphi_{n+2}\rangle& -\langle D_{int}^{-1},\varphi_{n+2}\rangle  \end{array}\right|=C\frac{\overline{\varphi_{n+1}(0)\varphi_{n+2}(0)}}{\kappa^2}  \varphi_{n}(0).
\end{equation}
Combining (\ref{EquationAux3}) and (\ref{EquationAux4}), we have there is a constant $C'\ne 0$ such that
\[
\beta_n+\tau_n= C'\varphi_{n}(0)
\]

Doing the same calculation for $\beta_n\tau_n$, we obtain
\[
\beta_n\tau_n=\frac{\frac{\overline{\varphi_{n+1}(0)\varphi_{n+2}(0)}}{\kappa^2 } \left( \frac{\overline{\Phi_{n}(0)}}{\overline{\Phi_{n+1}(0)}} \frac{\overline{\Phi_{n}(0)}}{\overline{\Phi_{n+2}(0)}}- \frac{\overline{\Phi_{n-1}(0)}}{\overline{\Phi_{n}(0)}} \frac{\overline{\Phi_{n-1}(0)}}{\overline{\Phi_{n+1}(0)}} +O(\varphi_n(0))\right)}{\frac{\overline{\varphi_{n+1}(0)\varphi_{n+2}(0)}}{\kappa^2 }\left(\frac{\overline{\Phi_{n+1}(0)}}{\overline{\Phi_{n+2}(0)}} -\frac{\overline{\Phi_{n}(0)}}{\overline{\Phi_{n+1}(0)}}+ \overline{\Phi_n(0)}\Phi_{n+1}(0) \right)}
\]
\[
\lim_n \beta_n\tau_n=-\frac1{\overline{\alpha_1\alpha_2}}
\]
with geometric convergence with rate $\sqrt{|\alpha_1\alpha_2|}$.

Therefore,
\[
\|q_{n,2}(z)-(z^2-\frac1{\overline{\alpha_1\alpha_2}})\|^{1/n}=|\alpha_1\alpha_2|^{1/2},
\]
where the norm is anything in the space of polynomials of degree 2. From Lemma \ref{LemBar-Lop-Saf},
\[
R_2(D_{int}^{-1})=\frac{|z_j|}{|\alpha_1\alpha_2|^{1/2}}=\frac1{|\alpha_1\alpha_2|}
\]
where $z_j$ are the roots of $z^2-\frac1{\overline{\alpha_1\alpha_2}}$ (both have magnitude $\frac1{|\alpha_1\alpha_2|^{1/2}}$). It is $\{z:|z|<\frac1{|\alpha_1\alpha_2|}\}$ is the largest disk centered at $z = 0$ in which $D_{int}^{-1}(z)$ can be extended to a meromorphic function with at most two poles.

\end{proof}

\begin{rmk} An alternative proof of Theorem \ref{TeoFormulaRmD-1} is a Hadamard formula for $R_m(D^{-1})$ given in \cite{Bar-Lop-Saf2}. One of such formula is written in term of $\varphi_{n+k}^{(j)}(0).$ These can be obtained using Corollary \ref{CorAsinDentro}, then the unknown expressions
\[
c_{m}=\int e^{-im\theta}\log w(e^{i\theta})d\theta,
\]
appear. The values $R_m(D^{-1}),\, j=0,1,2$ founded
\[
R_0(D^{-1})=R_1(D^{-1})=\frac1{\sqrt{|\alpha_1\alpha_2|}},\quad R_2(D^{-1})=\frac1{|\alpha_1\alpha_2|},
\]
let us to obtain $c_0$ and $c_1$.
\end{rmk}

\begin{rmk} From the proof we obtain also that the Fourier-Pad\'e approximants of type $(n,1)$ of $D^{-1}$  has exactly a pole at $\overline{\alpha_1}^{-1}$ or $\overline{\alpha_2}^{-1}$ according $n$ is even or odd. Thus, they converge to $D_{int}^{-1}$ in $\{z:|z|<\frac1{\sqrt{|\alpha_1\alpha_2|}}\}$.
\end{rmk}

\section{Zeros of period three}\label{SeccionTresCerosPerid}
If the sequence of zeros, $\{z_n\}$, is periodic of period three, the Verblunsky coefficients are not a geometric progression as one might naively expect. This case is more complex: if the three periodic zeros have magnitudes at most $\frac{-1+\sqrt{5}}{2}$, then the measure is in the Nevai class, i.e., $\lim_{n\to\infty}\Phi_n(0)=0$, while if the periodic zeros have magnitudes greater than $\frac{-1+\sqrt{5}}{2}$, some numerical experiments show that, for large degree, the zeros of OPUCs  are close to three arcs of the unit circle, so the orthogonality measure should be supported on these arcs. See Figures \ref{GrafCerosPerTres} and \ref{GrafCerosPerTres2}.

To prove Theorem \ref{cerosPerTres}, we need the following lemma whose proof is easy, so we omit it.

\begin{lem}
Let $\{a_k:k\ge 2\}$ be the sequence
\[
a_2=a_3=\frac{2r^2}{1+r^2},\quad a_{n+1}=r\frac{ra_{n-1}+a_n}{1+ra_{n-1}a_n},\quad n\ge 3.
\]
The following statements hold:
\begin{enumerate}
\item[(i)] $a_n\in(0,r)$ for all $n\ge 2$.

\item[(ii)] The sequence $\{a_n\}$ is monotone decreasing.

\item[(iii)] If $r\in(0,\frac{-1+\sqrt{5}}{2}]$, $\lim a_n=0$, while if $r\in(\frac{-1+\sqrt{5}}{2},1]$,
\[
\lim a_n=\sqrt{r+1-\frac1r}.
\]

\item[(iv)] When $r<\frac{-1+\sqrt{5}}2$,
\[
\frac{a_n}{a_{n-1}}<\frac{a_{n+1}}{a_{n-1}}=r\frac{r+\frac{a_n}{a_{n-1}}}{1+ra_{n-1}a_n}<r(r+1)<1.
\]

\item[(v)] Let $H=\limsup \frac{a_n}{a_{n-1}}$. We have
\[
H\le r(r+H)\Leftrightarrow H\le \frac{r^2}{1-r}<r(r+1)<1.
\]
\end{enumerate}
\end{lem}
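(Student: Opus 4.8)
The plan is to read the recursion as a two-term iteration $a_{n+1}=f(a_{n-1},a_n)$ driven by the map
\[
f(x,y)=r\,\frac{rx+y}{1+rxy},\qquad (x,y)\in[0,r]^2,
\]
and to exploit that, for $r<1$, $f$ sends $[0,r]^2$ into $[0,r)$ and is strictly increasing in each argument there. The first point is the elementary equivalence $f(x,y)<r\iff(1-y)(1-rx)>0$; the second follows from $\partial f/\partial x=r^2(1-y^2)/(1+rxy)^2>0$ and $\partial f/\partial y=r(1-r^2x^2)/(1+rxy)^2>0$ on that square. Since $a_2=a_3=\frac{2r^2}{1+r^2}$ lies in $(0,r)$ (because $2r<1+r^2$), statement (i) then drops out of a one-line induction. (For $r=1$ everything is degenerate: $a_n\equiv1$.)

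For (ii) I would prove by induction on $n$ that three consecutive terms stay ordered, $a_{n-2}\ge a_{n-1}\ge a_n$: granted this, monotonicity of $f$ in both slots gives $a_{n+1}=f(a_{n-1},a_n)\le f(a_{n-2},a_{n-1})=a_n$. So everything reduces to the base inequality $a_4=f(a_2,a_2)\le a_2$, i.e. $r(r+1)\le 1+ra_2^2$. When $r\le\frac{-1+\sqrt{5}}{2}$ this is immediate since $r^2+r\le1$; when $r>\frac{-1+\sqrt{5}}{2}$, substituting $a_2=\frac{2r^2}{1+r^2}$ and clearing denominators turns it into $P(r)\le0$, where
\[
P(r):=(r^2+r-1)(1+r^2)^2-4r^5=(r-1)^2\bigl(r^4-r^3-2r^2-r-1\bigr),
\]
and $P(r)\le0$ on $(0,1]$ because $r^4-r^3-2r^2-r-1<r^4-1\le0$. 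For (iii), (i)–(ii) give a limit $L\ge0$ with $L(1+rL^2)=r(r+1)L$, so $L=0$ or $L^2=r+1-\tfrac1r$; if $r\le\frac{-1+\sqrt{5}}{2}$ the second alternative cannot occur (or forces $L=0$), hence $\lim a_n=0$. If $r>\frac{-1+\sqrt{5}}{2}$, put $L_*=\sqrt{\,r+1-\tfrac1r\,}\in(0,r)$; one checks $f(L_*,L_*)=L_*$, and the base estimate $a_2=a_3\ge L_*$ is once more exactly $P(r)\le0$, so the same monotonicity induction yields $a_n\ge L_*$ for all $n$, whence $L=L_*$.

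For (iv) I would simply rewrite the recursion as $\dfrac{a_{n+1}}{a_{n-1}}=r\,\dfrac{r+a_n/a_{n-1}}{1+ra_{n-1}a_n}$; by (ii) we have $a_n/a_{n-1}<1$ and $1+ra_{n-1}a_n>1$, so the right-hand side is $<r(r+1)$, while $r(r+1)<1$ because $r<\frac{-1+\sqrt{5}}{2}$ is the positive root of $t^2+t-1$ (the same remark gives the chain $\frac{r^2}{1-r}<r(r+1)<1$). For (v), writing $c_n=a_n/a_{n-1}$ the identity in (iv) reads $c_{n+1}c_n\le r(r+c_n)$, and since (iii) yields $a_{n-1}a_n\to0$ in the relevant range $r<\frac{-1+\sqrt{5}}{2}$, this bound is asymptotically tight; taking limits superior yields $H\le r(r+H)$, the relation in (v), equivalent to $H\le\frac{r^2}{1-r}$ after the rearrangement $H(1-r)\le r^2$.

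The only step that is more than bookkeeping is the scalar inequality $P(r)\le0$ on $(0,1]$ — equivalently $\frac{2r^2}{1+r^2}\ge\sqrt{\,r+1-\tfrac1r\,}$ — which carries both the base case of the induction in (ii) and the lower bound $a_n\ge L_*$ in (iii), and only in the subrange $r\in(\frac{-1+\sqrt{5}}{2},1]$; extracting the factor $(r-1)^2$ is what makes it transparent. A lesser point to treat with a careful sentence rather than a guess is the limit-superior passage in (v): one should first record, from (iii), that $a_{n-1}a_n\to0$, so that the denominator in the (iv)-identity really does wash out in the limit.
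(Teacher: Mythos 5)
Your arguments for (i), (ii), (iii) and for the substantive chain in (iv) are correct, and since the paper omits the proof of this lemma altogether ("whose proof is easy, so we omit it"), your write-up actually supplies more than the paper does: the monotonicity of $f(x,y)=r\frac{rx+y}{1+rxy}$ in each variable on $[0,r]^2$, the induction on ordered triples $a_{n-2}\ge a_{n-1}\ge a_n$, and the scalar inequality $P(r)=(r^2+r-1)(1+r^2)^2-4r^5=(r-1)^2(r^4-r^3-2r^2-r-1)\le 0$, which settles both the base case $a_4\le a_3$ and the lower bound $a_2\ge L_*$, all check out (I verified the factorization). One small point: you silently dropped the first inequality of (iv), $\frac{a_n}{a_{n-1}}<\frac{a_{n+1}}{a_{n-1}}$; that was wise, since it contradicts (ii) and must be a misprint, but you should say so rather than pass over it.

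Part (v) is where there is a genuine gap, and in fact the statement itself is false, so no repair of your argument can succeed. From $c_{n+1}c_n\le r(r+c_n)$ you cannot ``take limits superior'' to get $H\le r(r+H)$: along a subsequence with $c_{n+1}\to H$ the other factor converges (after a further extraction) to some $c^*\le H$, and all you obtain is $Hc^*\le r(r+c^*)$; when $c_n$ converges this reads $H^2\le r(r+H)$, i.e. $H\le r\,\frac{1+\sqrt5}{2}$, which is much weaker than $H\le\frac{r^2}{1-r}$. And the stronger bound really fails: since $a_n\to0$ by (iii), the recursion is asymptotically the linear one $a_{n+1}=ra_n+r^2a_{n-1}$ with characteristic roots $r\,\frac{1\pm\sqrt5}{2}$, so by Poincar\'e's theorem and positivity $\frac{a_n}{a_{n-1}}\to r\,\frac{1+\sqrt5}{2}$, and $r\,\frac{1+\sqrt5}{2}>\frac{r^2}{1-r}$ for every $r\in(0,\frac{-1+\sqrt5}{2})$. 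Concretely, for $r=0.1$ the ratios tend to $0.1618\ldots$, whereas $\frac{r^2}{1-r}=0.0111\ldots$ and $r(r+H)\approx 0.026<H$. Your own remark that the bound is ``asymptotically tight'' points exactly at the problem: tightness makes the iteration Fibonacci-like, forcing the ratio $r\,\frac{1+\sqrt5}{2}$. So the flaw is in the paper's item (v) (and it propagates to the constant $\frac{r^2}{1-r}$ in Theorem 1); what your identity legitimately gives is $H=r\,\frac{1+\sqrt5}{2}$, hence $\limsup a_n^{1/n}=r\,\frac{1+\sqrt5}{2}<1$ for $r<\frac{-1+\sqrt5}{2}$, which still yields geometric decay of $\Phi_n(0)$, only with this larger constant.
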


\begin{proof} \textit{of Theorem \ref{cerosPerTres}.} From (\ref{recurrencia}) and (\ref{recurrencia2}), we have
\begin{multline*}
\Phi_{n+1}(z)=\\= z\left(z\left(z+\overline{\Phi_n(0)}\Phi_{n+1}(0)\right)+\left(\Phi_{n+1}(0)+z\Phi_n(0)\right) \overline{\Phi_{n-1}(0)}\right)\Phi_{n-2}(z) +\\ +\left(z\left(z+\overline{\Phi_n(0)}\Phi_{n+1}(0)\right)\Phi_{n-1}(0)+\Phi_{n+1}(0) +z\Phi_n(0)\right)\Phi_{n-2}^*(z).
\end{multline*}
If $\Phi_{n+1}$ and $\Phi_{n-2}$ have a common zero $\zeta$,
\[
\Phi_{n+1}(0)=-\zeta\frac{\zeta\Phi_{n-1}(0)+\Phi_n(0)}{1+\zeta \Phi_{n-1}(0)\overline{\Phi_n(0)}}.
\]

Hence,
\[
|\Phi_1(0)|=|\alpha|,\quad |\Phi_2(0)|\le|\beta|\frac{|\beta|+|\alpha|}{1+|\alpha||\beta|}\le \frac{2r^2}{1+r^2}, \quad |\Phi_3(0)|\le \frac{2r^2}{1+r^2},
\]
\[
\left|\Phi_{n+1}(0)\right|=|\zeta|\left|\frac{\zeta\Phi_{n-1}(0)+\Phi_n(0)}{1+\zeta \Phi_{n-1}(0)\overline{\Phi_n(0)}}\right|\le  r\frac{r|\Phi_{n-1}(0)|+|\Phi_n(0)|}{1+r |\Phi_{n-1}(0)||\Phi_n(0)|}.
\]

Let $\{a_n\}$ be as in the lemma above. Thus,
\[
\left|\Phi_{n+1}(0)\right|\le|a_n|,\quad n\ge 2.
\]
If $r\in(0,\frac{-1+\sqrt{5}}{2}]$, according to lemma above, $\lim a_n=0$. Hence, $\lim\Phi_n(0)=0$ holds.

If $r\in(0,\frac{-1+\sqrt{5}}{2})$,
\[
\limsup|\Phi_n(0)|^{1/n}\le \limsup|a_n(0)|^{1/n}\le \limsup\left|\frac{a_n}{a_{n-1}}\right|<\frac{r^2}{1-r}<1.
\]
\end{proof}

\section{Zeros' distance from the unit circle}\label{SecDisZerUC}

For the proof of Theorem \ref{TeoVelCirOPUC}, we require some auxiliary results.

\begin{lem} Let $\Lambda$ denote an infinite subset of the natural numbers. Let
\[
\{V_n(z)=\prod_{j=1}^{n}(z-v_{n,j}):n\in\Lambda\}
\]
be a sequence of monic polynomials whose zeros, $\{v_{n,j}\}$, all lie in $\mathbb{D}$ and such that
\[
\lim_{n\to\infty,\, n\in\Lambda}\frac{V_n(z)}{V_n^*(z)}=0,
\]
uniformly on compact subset  of $\mathbb{D}$. Suppose there are $z_0\in\mathbb{D}$ and $r>0$ such that
$V_n(z)\ne 0$ for all $z:|z-z_0|>r$. Then
\[
\lim_{n\to\infty,\, n\in\Lambda}\sum_{j=1}^n(1-|v_{n,j}|)=\infty.
\]
\end{lem}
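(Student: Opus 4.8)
The plan is to prove the statement directly, by pinning down one point $w\in\mathbb{D}$ at which the modulus of $V_n/V_n^*$ is bounded below by $\exp\!\bigl(-c\sum_j(1-|v_{n,j}|)\bigr)$ with a constant $c$ independent of $n$; then the assumption $V_n/V_n^*\to 0$ immediately forces $\sum_j(1-|v_{n,j}|)\to\infty$. Note first that
\[
\frac{V_n(z)}{V_n^*(z)}=\prod_{j=1}^n\frac{z-v_{n,j}}{1-\overline{v_{n,j}}\,z},
\]
a finite Blaschke-type product, and that $V_n^*(w)\ne0$ for every $w\in\mathbb{D}$ (its zeros are the points $1/\overline{v_{n,j}}$, of modulus $>1$), so the hypothesis applies to the singleton compact set $\{w\}$ and gives $V_n(w)/V_n^*(w)\to0$ along $\Lambda$. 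The hypothesis that $V_n(z)\ne0$ off $\{z:|z-z_0|\le r\}$ enters precisely here: it confines all zeros $v_{n,j}$ to $\{z:|z-z_0|\le r\}$, which lets us choose $w$ at a positive distance from every zero of every $V_n$.

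Concretely, I would fix $w\in\mathbb{D}$ with $|w-z_0|>r$, put $\delta_0:=|w-z_0|-r>0$, so that $|w-v_{n,j}|\ge\delta_0$ for all $n\in\Lambda$ and all $j$ by the triangle inequality, and then use the elementary identity $|1-\overline{v}\,w|^2=|w-v|^2+(1-|v|^2)(1-|w|^2)$. It yields, for any $v$ with $|w-v|\ge\delta_0$,
\[
\left|\frac{1-\overline{v}\,w}{w-v}\right|^2=1+\frac{(1-|v|^2)(1-|w|^2)}{|w-v|^2}\le 1+\frac{2\,(1-|v|)}{\delta_0^{2}},
\]
using $1-|v|^2\le2(1-|v|)$ and $1-|w|^2\le1$. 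Taking logarithms, dividing by $2$, summing over $j$, and using $\log(1+x)\le x$ gives
\[
\log\frac{|V_n^*(w)|}{|V_n(w)|}=\frac12\sum_{j=1}^n\log\left|\frac{1-\overline{v_{n,j}}\,w}{w-v_{n,j}}\right|^2\le \frac{1}{\delta_0^{2}}\sum_{j=1}^n\bigl(1-|v_{n,j}|\bigr),
\]
that is, $\sum_{j=1}^n(1-|v_{n,j}|)\ge \delta_0^{2}\log\bigl(|V_n^*(w)|/|V_n(w)|\bigr)$. Since $V_n(w)/V_n^*(w)\to0$, the right-hand side tends to $+\infty$, and the proof is complete.

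I expect the only delicate point to be the \emph{direction} of the estimate: the argument needs an \emph{upper} bound for $-\log|V_n(w)/V_n^*(w)|$ in terms of $\sum_j(1-|v_{n,j}|)$, whereas only the opposite bound holds cheaply for an arbitrary $w$. Such an upper bound genuinely fails for a generic $w$, because a zero $v_{n,j}$ lying near $w$ makes $-\log|V_n(w)/V_n^*(w)|$ large while adding only an $O(1)$ term to the sum; it is exactly the confinement of all zeros to $\{z:|z-z_0|\le r\}$ that rules this out and makes the choice of a ``far'' $w$ possible. (One should also check that such a $w\in\mathbb{D}$ exists, i.e.\ that $\{z:|z-z_0|\le r\}$ does not swallow all of $\mathbb{D}$; in the degenerate case one would instead run a mean-value/Jensen estimate on a circle $\{z:|z|=\rho\}$, $\rho<1$, to separate the boundedly many ``interior'' zeros from those near $\Gamma$.)
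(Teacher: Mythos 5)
Your proof is correct for the statement as printed, and its engine is the same as the paper's: evaluate the Blaschke quotient $V_n/V_n^*$ at one point of $\mathbb{D}$ lying at a fixed positive distance from every zero of every $V_n$, and bound $-\log$ of each factor by a constant multiple of $1-|v_{n,j}|$. The difference is where that point comes from, and it is worth flagging: the paper's own proof (it normalizes $z_0=0$ and then uses $|v_{n,j}|\ge r$ together with $\alpha(1-|v_{n,j}|)<\log|v_{n,j}|$), as well as the way the lemma is invoked inside the proof of Lemma \ref{LemaCovCeroSumaCeros}, show that the intended hypothesis is ``$V_n(z)\ne0$ for all $z$ with $|z-z_0|<r$'', i.e.\ a zero-free disk around $z_0$; the printed ``$>$'' is a misprint, and the intended evaluation point is $z_0$ itself. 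You read the inequality literally (zeros confined to $\{z:|z-z_0|\le r\}$) and accordingly evaluate at a $w$ outside that disk. Fortunately your estimate is indifferent to which reading is in force: it only uses one point $w\in\mathbb{D}$ with $|w-v_{n,j}|\ge\delta_0$ for all $n\in\Lambda$ and all $j$, so under the intended hypothesis you simply take $w=z_0$, $\delta_0=r$, and the chain $\log\bigl(|V_n^*(w)|/|V_n(w)|\bigr)\le\delta_0^{-2}\sum_j(1-|v_{n,j}|)$, obtained from $|1-\overline{v}w|^2=|w-v|^2+(1-|v|^2)(1-|w|^2)$, is unchanged. In that form your argument is in fact slightly cleaner than the paper's, which first performs the M\"obius normalization $z_0\mapsto 0$ and then needs the two-sided comparison constants $k_1,k_2$ for $1-|\cdot|$ under that map; your identity does the per-factor estimate at a general point in one line.

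One minor caveat: the degenerate case you flag (no $w\in\mathbb{D}$ with $|w-z_0|>r$) arises only under the literal reading, where the zero-location hypothesis is vacuous and the claim collapses to the unconditional implication (\ref{EquationAux6})$\Rightarrow$(\ref{EquationAux7}). Your Jensen/mean-value fallback would indeed settle even that case --- the average of $\log|1-\overline{v}z|$ over $|z|=\rho$ is $0$, the average of $-\log|z-v|$ is $-\log\max(\rho,|v|)\le C(\rho)(1-|v|)$, so averaging $\log|V_n^*/V_n|$ over $\{z:|z|=\rho\}$ bounds it by $C(\rho)\sum_j(1-|v_{n,j}|)$, which must tend to $\infty$ by the uniform convergence on that circle --- but as written it is only a sketch; carried out, it would prove the hard half of Lemma \ref{LemaCovCeroSumaCeros} with no hypothesis on the zeros at all. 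Under the intended zero-free-disk hypothesis no case distinction is needed.
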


\begin{proof} Without loss, we can assume $z_0=0$. In fact, by the simple change of variables
\[
z=\frac{\zeta+z_0}{1+\overline{z_0}\zeta}
\]
we obtain
\[
\frac{W_n(\zeta)}{W_n^*(\zeta)}=\frac{V_n(\frac{\zeta+z_0}{1+\overline{z_0}\zeta})} {V_n^*(\frac{\zeta+z_0}{1+\overline{z_0}\zeta})},
\]
where $W_n$ is a monic polynomials whose zeros, $\zeta_{n,j},\, j=1,\ldots,n$, lie in $\mathbb{D}$ and there exists $\delta>0$ such that
\[
|\zeta_{n,j}|>\delta\quad j=1,\ldots,n.
\]
Moreover, we have
\[
\lim_{n\to\infty,\, n\in\Lambda}\sum_{j=1}^n(1-|v_{n,j}|)=\infty\Leftrightarrow \lim_{n\to\infty,\, n\in\Lambda}\sum_{j=1}^n(1-|\zeta_{n,j}|)
\]
according to there are $k_1 = k_1(z_0)>0$, $k_2 = k_2(z_0)>0$ such that
\[
k_1(1-|\zeta|)\leq 1 -
\left|\frac{\zeta+z_0}{1+\overline{z_0}\zeta}\right|\leq
k_2(1-|\zeta|),\quad\forall\zeta\in\mathbb{D}.
\]
Thus, we assume $z_0=0$. By hypothesis,
\begin{equation}\label{EquationAux5}
\lim_{n\to\infty,\, n\in\Lambda}\prod_{j=1}^nv_{n,j}=\lim_{n\to\infty,\, n\in\Lambda}\frac{V_n(0)}{V_n^*(0)}=0\Leftrightarrow \lim_{n\to\infty,\, n\in\Lambda}\sum_{j=1}^n\log|v_{n,j}|=-\infty.
\end{equation}
Since $|v_{n,j}|\ge r$ and  there is $\alpha<-1$ such that $\alpha x<\log(1-x),\,\forall x\in(0,1-r),$ we have
\[
\alpha (1-|v_{n,j}|)<\log(1-(1-|v_{n,j}|))=\log|v_{n,j}|
\]
and the proof of the lemma follows from (\ref{EquationAux5}).
\end{proof}

\begin{rmk} A sequence of monic polynomials $\{V_n(z)=\prod(z-v_{n,j}),n=1,2,\ldots\}$, which zeros lie in $\mathbb{D}$ satisfying
\begin{equation}\label{EquationAux10}
\lim_{n\to\infty}\frac{V_n(z)}{V_n^*(z)}=0
\end{equation}
uniformly on compact subset of $\mathbb{D}$, plays a key role in rational approximation. In fact, condition (\ref{EquationAux10}) is equivalent to the set of rational functions
\[
\{\frac{p_n}{V_n^*}:p_n\textnormal{ polynomial of degree}\le n, n=1,2,\ldots\}
\]
is dense in the space of analytic function in $\overline{\mathbb{D}}$ with the uniform norm. See Corollary 2, p. 246, in \cite{Wal}.

\end{rmk}

The next result is a generalization of a Walsh's Theorem (see Theorem 9, p. 247, in \cite{Wal}).

\begin{lem}\label{LemaCovCeroSumaCeros} Let $\Lambda$ denote an infinite subset of the natural numbers and let
\[
\{V_n(z)=\prod_{j=1}^{n}(z-z_{n,j}):n\in\Lambda\}
\]
be a sequence of monic polynomials whose zeros, $\{v_{n,j}\}$, lie in $\mathbb{D}$. The following statements are equivalent:
\begin{equation}\label{EquationAux6}
\lim_{n\to\infty,\, n\in\Lambda}\frac{V_n(z)}{V_n^*(z)}=0,
\end{equation}
uniformly on compact subset of $\mathbb{D}$.
\begin{equation}\label{EquationAux7}
\lim_{n\to\infty,\, n\in\Lambda}\sum_{j=1}^n(1-|v_{n,j}|)=\infty.
\end{equation}
\end{lem}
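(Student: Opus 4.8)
The plan is to prove the two implications separately: $(\ref{EquationAux7})\Rightarrow(\ref{EquationAux6})$ by a direct Blaschke‑product estimate, and $(\ref{EquationAux6})\Rightarrow(\ref{EquationAux7})$ by Jensen's formula. Note that the preceding lemma already yields $(\ref{EquationAux6})\Rightarrow(\ref{EquationAux7})$ under the extra hypothesis that all zeros lie in a fixed disk; the content here is to dispense with that localization. Throughout I write
\[
B_n(z)=\frac{V_n(z)}{V_n^*(z)}=\prod_{j=1}^n\frac{z-v_{n,j}}{1-\overline{v_{n,j}}\,z},
\]
a finite Blaschke product with $|B_n|<1$ on $\mathbb{D}$ and $|B_n|=1$ on $\Gamma$, and use the elementary identity
\[
\left|\frac{z-v}{1-\overline{v}\,z}\right|^{2}=1-\frac{(1-|z|^{2})(1-|v|^{2})}{|1-\overline{v}\,z|^{2}},\qquad v\in\mathbb{D},\ z\in\overline{\mathbb{D}}.
\]

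For $(\ref{EquationAux7})\Rightarrow(\ref{EquationAux6})$ I fix $\rho\in(0,1)$. For $|z|\le\rho$ the bounds $1-|v|^{2}\ge 1-|v|$, $|1-\overline{v}z|\le1+\rho$, $1-|z|^{2}\ge1-\rho^{2}$ turn the identity into $\bigl|\tfrac{z-v}{1-\overline{v}z}\bigr|^{2}\le 1-\tfrac{(1-\rho)(1-|v|)}{1+\rho}$, hence $\log\bigl|\tfrac{z-v}{1-\overline{v}z}\bigr|\le-\tfrac{1-\rho}{2(1+\rho)}(1-|v|)$ by $\log(1-t)\le-t$. Multiplying over the $n$ zeros of $V_n$,
\[
\log|B_n(z)|\le-\frac{1-\rho}{2(1+\rho)}\sum_{j=1}^{n}\bigl(1-|v_{n,j}|\bigr)\longrightarrow-\infty
\]
as $n\to\infty$ in $\Lambda$, uniformly on $\{|z|\le\rho\}$; since $\rho$ is arbitrary, $(\ref{EquationAux6})$ follows. (This direction uses only $(\ref{EquationAux7})$.)

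For $(\ref{EquationAux6})\Rightarrow(\ref{EquationAux7})$ I argue by contradiction. If $(\ref{EquationAux7})$ fails there are an infinite $\Lambda'\subseteq\Lambda$ and $M<\infty$ with $\sum_{j=1}^{n}(1-|v_{n,j}|)\le M$ for all $n\in\Lambda'$. Fix $r\in(0,1)$. Applying Jensen's formula on $\{|z|=r\}$ to each factor $\tfrac{z-v}{1-\overline{v}z}$ — whose only zero is $v$ and whose only pole $1/\overline{v}$ satisfies $|1/\overline{v}|>1>r$ — the mean of its log‑modulus equals $\log r$ if $|v|\le r$ and $\log|v|$ if $|v|>r$; summing,
\[
\frac{1}{2\pi}\int_{0}^{2\pi}\log|B_n(re^{i\theta})|\,d\theta=N_n(r)\log r+\sum_{j:\,|v_{n,j}|>r}\log|v_{n,j}|,\qquad N_n(r):=\#\{j:|v_{n,j}|\le r\}.
\]
From $\sum_j(1-|v_{n,j}|)\le M$ one gets $N_n(r)\le M/(1-r)$, and from $\log t\ge-(1-t)/t\ge-(1-t)/r$ for $t\in[r,1)$ one gets $\sum_{|v_{n,j}|>r}\log|v_{n,j}|\ge-M/r$; hence for every $n\in\Lambda'$
\[
\frac{1}{2\pi}\int_{0}^{2\pi}\log|B_n(re^{i\theta})|\,d\theta\ge\frac{M\log r}{1-r}-\frac{M}{r},
\]
a finite bound independent of $n$. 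But $(\ref{EquationAux6})$ forces $\max_{|z|=r}|B_n(z)|\to0$, so the same mean is $\le\log\max_{|z|=r}|B_n(z)|\to-\infty$ along $\Lambda'$ — a contradiction.

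I expect the second implication to be the delicate step, the key being to avoid settling for the sub‑mean‑value inequality $\log|B_n(0)|\le\frac{1}{2\pi}\int\log|B_n(re^{i\theta})|\,d\theta$: zeros of $V_n$ accumulating near (or at) the origin can send $\log|B_n(0)|$ to $-\infty$ while $B_n$ stays bounded away from $0$ on $\{|z|=r\}$, so that estimate carries no information. Jensen's formula is exactly what is needed, since it replaces $\log|B_n(0)|$ by the localized quantity $N_n(r)\log r$, and the standing bound $\sum_j(1-|v_{n,j}|)\le M$ is precisely what keeps $N_n(r)$, the number of zeros lying well inside the disk, uniformly bounded. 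The first implication, by contrast, is a routine Blaschke estimate.
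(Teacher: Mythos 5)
Your proof is correct, but the hard implication is argued quite differently from the paper. For (\ref{EquationAux7})$\Rightarrow$(\ref{EquationAux6}) both arguments are the same elementary Blaschke-factor estimate (the paper bounds each factor on $|z|\le T$ by $\frac{T+|v_{n,j}|}{1+T|v_{n,j}|}$ and compares $\log$ of that with $1-|v_{n,j}|$; your identity-based bound is equivalent). For (\ref{EquationAux6})$\Rightarrow$(\ref{EquationAux7}) the paper does not use Jensen's formula: it reduces to the preceding lemma (whose real hypothesis, despite the typo in its statement, is that the zeros \emph{omit} a fixed disk $\{|z-z_0|<r\}$ — not that they lie in one, as you paraphrase; under the literal reading the lemma would be trivial) and then handles the remaining case by a subsequence-extraction/pigeonhole argument: assuming $\sum_j(1-|v_{n,j}|)\le M$ along a subsequence, it places $k$ disjoint small disks on the circle $|z|=1/2$ with $k(1/2-r)>M$ and extracts nested subsequences forcing a zero in each disk, contradicting the bound. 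Your route replaces all of this by a single quantitative computation: Jensen's formula on $|z|=r$ gives the exact mean $N_n(r)\log r+\sum_{|v_{n,j}|>r}\log|v_{n,j}|$, which the bounded-sum hypothesis keeps uniformly bounded below, while (\ref{EquationAux6}) drives it to $-\infty$. This is self-contained (it does not need the auxiliary lemma at all, so the possible accumulation of zeros at interior points — the issue the paper's auxiliary lemma and disk argument are designed to handle — is absorbed into the term $N_n(r)\log r$), avoids any subsequence bookkeeping, and yields an explicit quantitative contradiction; the paper's argument, in exchange, stays at the level of elementary inequalities plus compactness and fits its stated goal of generalizing Walsh's theorem with minimal machinery. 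Your treatment of the degenerate factor $v_{n,j}=0$ (where the "pole at $1/\overline v$" phrasing breaks down but the mean is still $\log r$) is a triviality worth one clause if this were written out, nothing more.
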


\begin{proof} Assume (\ref{EquationAux7}) holds. Let $T\in(0,1)$ be fixed. We have
\[
\frac{1-T}{T+1}(1-|v_{n,j}|)\le \frac{(1-T)(1-|v_{n,j}|)}{1+T|v_{n,j}|}\le \frac{1-T}{T}(1-|v_{n,j}|).
\]
Thus, (\ref{EquationAux7}) is equivalent to
\[
\lim_{n\to\infty,\, n\in\Lambda}\sum_{j=1}^n\frac{(1-T)(1-|v_{n,j}|)}{1+T|v_{n,j}|}=\infty
\]
for each $T\in(0,1)$. As $\frac{(1-T)(1-|v_{n,j}|)}{1+T|v_{n,j}|}<1-T<1$, there exists $\lambda<-1$ such that
\begin{multline*}
\lambda\left(\frac{(1-T)(1-|v_{n,j}|)}{1+T|v_{n,j}|}\right) \le\log\left(1-\frac{(1-T)(1-|v_{n,j}|)}{1+T|v_{n,j}|}\right)\\ \le -\left(\frac{(1-T)(1-|v_{n,j}|)}{1+T|v_{n,j}|}\right)
\end{multline*}
\[
\Leftrightarrow \lambda\left(\frac{(1-T)(1-|v_{n,j}|)}{1+T|v_{n,j}|}\right) \le\log\left(\frac{T+|v_{n,j}|)}{1+T|v_{n,j}|}\right)\le -\left(\frac{(1-T)(1-|v_{n,j}|)}{1+T|v_{n,j}|}\right).
\]
Hence, (\ref{EquationAux7}) is equivalent to
\[
\lim_{n\to\infty,\, n\in\Lambda}\sum_{j=1}^n\log\left|\frac{T+|v_{n,j}|)}{1+T|v_{n,j}|}\right|=-\infty.
\]
If $|z|\le T$, we have
\[
\left|\frac{V_n(z)}{V_n^*(z)}\right|\le\prod_{j=1}^n\frac{T+|v_{n,j}|}{1+T|v_{n,j}|}.
\]
Therefore, if (\ref{EquationAux7}) holds, then we have (\ref{EquationAux6}).

According to Lemma above, to prove (\ref{EquationAux6}) implies (\ref{EquationAux7}), we need only show the following statement: Assume (\ref{EquationAux6}) holds and that for all infinite set $\Lambda_1\subset\Lambda$, all $z_0\in\mathbb{D}$, and for any $\epsilon>0$ there exists an infinity set $\Lambda_2\subset\Lambda_1$ such that for any $n\in\Lambda_2$ there is $j\in\{1,\ldots,n\}$ such that $|v_{n,j}|<\epsilon$, i.e., $V_{n}$ has a zero in $\{z:|z-z_0|<\epsilon\}$, then
\[
\lim_{n\to\infty,\, n\in\Lambda}\sum_{j=1}^{n}(1-|v_{n,j}|)=\infty.
\]

To get a contradiction, we assume that there exist $M>0$ and an infinite set $\Gamma\subset\Lambda$ such that
\begin{equation}\label{contradSuma}
\sum_{j=1}^{n}(1-|v_{n,j}|)\le M,\quad \forall n\in\Gamma.
\end{equation}
We can choose $w_1,\ldots,w_k$ in the circle $\{z:|z|=1/2\}$ and $r>0$ sufficiently small such that the disks  $\{z:|z-w_j|<r\},\, j=1,\ldots,k$ are disjoint and
\[
k(1/2-r)>M.
\]
According to our assumptions, we can choose an infinity set $\Gamma_1\subset \Gamma$ such that
\[
V_{n}(z)\textnormal{ has a zero in $\{z:|z-w_1|<r\}$ for all $n\in\Gamma_1$}.
\]

Given $\Gamma_1$, we can choose $\Gamma_2\subset\Gamma_1\subset\Gamma$ such that
\[
V_{n}(z)\textnormal{ has a zero in $\{z:|z-w_2|<r\}$ for all $n\in\Gamma_2$},
\]
so, $V_n,\, n\in\Gamma_2$, has a zero in $\{z:|z-w_2|<r\}$ and in $\{z:|z-w_1|<r\}$. In this way, there exists an infinity set $\Gamma_k\subset \Gamma$ such that
\[
V_{n}(z)\textnormal{ has a zero in $|z-w_j|<r$ for all $n\in\Gamma_k$ and $j=1,\ldots,k$.}
\]
But,  just as we have been chosen $w_1,\ldots,w_k$ and $r$, we get a contradiction. In fact, because $\Gamma_k\subset\Gamma$ and for $n\in\Gamma_k$
\[
M\ge\sum_{j=1}^n(1-|v_{n,j}|)\ge \sum_{j:|v_{n,j}-w_l|<r,\,l=1,\ldots,k}(1-|v_{n,j}|)>k(1/2-r)>M.
\]
\end{proof}

\begin{proof}\textit{of Theorem \ref{TeoVelCirOPUC}.} It is very well known that $\lim\Phi_n(0)=0$ is equivalent to
\[
\lim_{n\to\infty}\frac{\Phi_n(z)}{\Phi_n^*(z)}=0,
\]
uniformly on compact subset of $\mathbb{D}$, see, for example, Theorem 1.7.4, p. 91 in \cite{SimonTI}. Therefore, Theorem \ref{TeoVelCirOPUC} follows immediately form Lemma \ref{LemaCovCeroSumaCeros}.

\end{proof}

\subsection{Zero's distance on an arc of the unit circle}

The paper \cite{MarMclSaf2} proved
\[
|z_{n,j}|=1-\frac{\log n}{n}+O(\frac1{n}).
\]
for the zeros of OPUCs whose orthogonality measure is $d\mu(\theta)=W(e^{i\theta})d\theta$, with $W(z) = w(z)\prod_{k=1}^m
|z - a_k|^{2\beta_k}$,\, $|z| = 1,\, |a_k| = 1, \, \beta_k> -1/2,\, k = 1,\ldots,m$, where $w(z) > 0$,  $|z| = 1$, has analytic continuation to an annulus around the unit circle.

It is known that for positive weight almost everywhere on an arc, $\Delta$, of the unit circle, the zeros of their orthogonal polynomials approach to the unit circle (see \cite{Bel-Lop}) as the degree of the polynomials increasing. Moreover, see \cite{Bel-Min}, on each neighborhood of each arc, $\Delta'\subset\Delta$, their exist $O(n)$ zeros of $\Phi_n$ for $n$ large enough.

Next, we find the rate  with which the zeros of OPUCs of the Chebyshev weight on an arc of the unit circle approach to $\partial(\mathbb{D})$. Consider the weight
\[
w(\theta)=\frac{\sin(\alpha/2)}{2\sin(\theta/2)\sqrt{\cos^2\alpha/2-\cos^2\theta/2}},\quad \theta\in[\alpha,2\pi-\alpha],
\]
let $\{\Phi_n\}$ be the sequence of monic orthogonal polynomial for $w$ and $\{z_{n,j}:j=1,\ldots,n\}$ are their zeros.

\begin{teo} If $\lim_{n\to\infty}z_{n,j_n}=e^{i\theta_0}$ with $\theta_0\in[\alpha,2\pi-\alpha]$,
\[
|z_{n,j_n}|=1-f(\theta_0)/n+O(1/n^2),
\]
where $f$ is a positive continuous function in $[\alpha,2\pi-\alpha]$ which is nonzero in $(\alpha,2\pi-\alpha)$.
\end{teo}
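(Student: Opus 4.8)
The plan is to obtain a closed form for $\Phi_n$ and then read off the zeros from a transcendental equation which I solve asymptotically; the weight $w$ is in fact explicitly solvable (of Bernstein--Szeg\H{o} type on the arc $\Gamma_\alpha=\{e^{i\theta}:\alpha\le\theta\le2\pi-\alpha\}$). First I would push the measure through the Szeg\H{o} map: with $x=z+z^{-1}=2\cos\theta$ and the identities $\cos^{2}(\alpha/2)-\cos^{2}(\theta/2)=\tfrac12(\cos\alpha-\cos\theta)$, $1-\cos\theta=2\sin^{2}(\theta/2)$, together with $d\theta=-dx/\sqrt{4-x^{2}}$, the image of $w(\theta)\tfrac{d\theta}{2\pi}$ under $z\mapsto z+z^{-1}$ is a constant multiple of $\dfrac{dx}{(2-x)\sqrt{(2+x)(2\cos\alpha-x)}}$ on $[-2,2\cos\alpha]$ --- the Chebyshev weight of that interval times a fixed rational factor, i.e.\ a Bernstein--Szeg\H{o} weight, whose Jacobi parameters are eventually those of the Chebyshev measure of $[-2,2\cos\alpha]$. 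Since $w$ is invariant under $\theta\mapsto2\pi-\theta$, the Verblunsky coefficients of $w$ are real and, by the above, eventually constant, say $\Phi_n(0)=a$ with $a\in(-1,1)$, $|a|=\sin(\alpha/2)$, for $n\ge N$. Hence for $n\ge N$ the transfer matrix of the Szeg\H{o} recursion is a constant matrix $T(z)$, and diagonalizing it gives the closed form
\[
\Phi_{n}(z)=P(z)\,\Lambda_{+}(z)^{\,n}+Q(z)\,\Lambda_{-}(z)^{\,n},\qquad n\ge N,
\]
where $\Lambda_{\pm}(z)=\tfrac12\bigl((z+1)\pm\sqrt{(z-1)^{2}+4z\sin^{2}(\alpha/2)}\bigr)$ are the eigenvalues of $T(z)$ and $P,Q$ are fixed functions, rational in $z$ and in that square root (determined by the polynomials $\Phi_{N},\Phi_{N+1}$), all of them elementary and explicit.

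Second, I would record the geometry of $\Lambda_{\pm}$. On $|z|=1$ one has $(z-1)^{2}+4z\sin^{2}(\alpha/2)=4z\bigl(\sin^{2}(\alpha/2)-\sin^{2}(\theta/2)\bigr)$, which is negative for $e^{i\theta}\in\Gamma_{\alpha}$; consequently $|\Lambda_{+}(z)|=|\Lambda_{-}(z)|=\cos(\alpha/2)$ for $z\in\Gamma_{\alpha}$, while $|\Lambda_{+}(z)|>|\Lambda_{-}(z)|$ for $z\in\mathbb{D}\setminus\Gamma_{\alpha}$, and $g(z):=\tfrac12\log|\Lambda_{+}(z)/\Lambda_{-}(z)|$ is the Green function of $\widehat{\mathbb{C}}\setminus\Gamma_{\alpha}$ with pole at $\infty$: it vanishes on $\Gamma_{\alpha}$, is positive and harmonic in $\mathbb{D}\setminus\Gamma_{\alpha}$, its inward normal derivative along $\Gamma_{\alpha}^{\circ}$ equals $2\pi\omega(\theta)$ (with $\omega$ the equilibrium density of $\Gamma_{\alpha}$, an elementary function positive on the open arc with an inverse-square-root singularity at the endpoints $e^{\pm i\alpha}$, where $\Lambda_{+}$ and $\Lambda_{-}$ coalesce), and $g(z)=\partial_{\mathbf{n}}g(e^{i\theta_{0}})\,(1-|z|)\,\bigl(1+O(1-|z|)\bigr)$ as $z\to e^{i\theta_{0}}\in\Gamma_{\alpha}^{\circ}$ from inside $\mathbb{D}$, since near a smooth point of the arc $1-|z|$ is, to first order, the distance of $z$ to $\Gamma_{\alpha}$.

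Third, the zeros. From $\Phi_{n}(z_{n,j})=0$ one gets $\bigl(\Lambda_{+}(z_{n,j})/\Lambda_{-}(z_{n,j})\bigr)^{n}=-Q(z_{n,j})/P(z_{n,j})$, whence, taking moduli and logarithms,
\[
2n\,g(z_{n,j})=\log\Bigl|\frac{Q(z_{n,j})}{P(z_{n,j})}\Bigr|.
\]
All zeros lie in $\mathbb{D}$, so the left side is positive near $\Gamma_{\alpha}^{\circ}$, and a direct computation with the explicit $P,Q$ shows $\log|Q/P|$ is a positive continuous function on the open arc. Thus if $z_{n,j_{n}}\to e^{i\theta_{0}}$ with $\theta_{0}\in(\alpha,2\pi-\alpha)$, then $g(z_{n,j_{n}})=\tfrac{1}{2n}\log|Q(e^{i\theta_{0}})/P(e^{i\theta_{0}})|+o(1/n)$, and inverting the local expansion of $g$ gives
\[
1-|z_{n,j_{n}}|=\frac{\log\bigl|Q(e^{i\theta_{0}})/P(e^{i\theta_{0}})\bigr|}{2n\,\partial_{\mathbf{n}}g(e^{i\theta_{0}})}+O(1/n^{2})=:\frac{f(\theta_{0})}{n}+O(1/n^{2}),
\]
with $f(\theta):=\log|Q(e^{i\theta})/P(e^{i\theta})|/\bigl(2\,\partial_{\mathbf{n}}g(e^{i\theta})\bigr)=\log|Q/P|/(4\pi\omega)$ elementary, positive and continuous on the open arc; because $\omega(\theta)\to\infty$ at $\theta=\alpha,2\pi-\alpha$ while $\log|Q/P|$ stays bounded there, $f$ extends continuously by $0$ at the endpoints, consistent with the faster, $O(1/n^{2})$, approach of zeros that cluster at the tips (the case $\theta_{0}\in\{\alpha,2\pi-\alpha\}$).

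The chief obstacle is the asymptotic inversion of the zero equation with control of the $O(1/n^{2})$ term, uniformly enough to cover a whole convergent sequence $z_{n,j_{n}}\to e^{i\theta_{0}}$, and especially the endpoint regime: there $\Lambda_{+}$ and $\Lambda_{-}$ coalesce, the expansion $g(z)\sim\partial_{\mathbf{n}}g(e^{i\theta_{0}})(1-|z|)$ degenerates (both sides feeling the square-root singularity of $\omega$, and near a tip $1-|z|$ is no longer comparable to the distance to $\Gamma_{\alpha}$), so one must localize --- via a Bessel-type local parametrix near the tip --- to confirm that there the zeros really are at distance $O(1/n^{2})$ and that $f$ vanishes continuously rather than behaving erratically. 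Assembling $P$ and $Q$ with all constants through the Szeg\H{o} mapping and the substitution $x=z+z^{-1}$ is elementary but bookkeeping-heavy, and is the other place where care is needed, since the precise $f$ depends on getting $|Q/P|$ on $\Gamma_{\alpha}$ exactly right (and on checking its strict positivity there, which also re-derives that the relevant zeros indeed lie strictly inside $\mathbb{D}$).
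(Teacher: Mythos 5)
Your plan has the same skeleton as the paper's proof: an exact two--term exponential representation of the polynomials, the transcendental zero equation obtained by setting it to zero, passage to moduli, and a first--order inversion near the arc. The paper gets the representation from the Akhiezer--Golinskii closed formula $\varphi_n(z)=K_n\{w^n(v)/(1-\beta v)+v\,w^n(1/v)/(v-\beta)\}$, $z=h(v)$, in the uniformizing variable $v$, and its equation (\ref{EquationAux8}) is exactly your $(\Lambda_+/\Lambda_-)^n=-Q/P$; your $f$ plays the role of the paper's $\widetilde f(\omega_0)$. The genuine weak point is how you obtain the closed form. The assertion ``the Jacobi parameters of the pushed--forward Bernstein--Szeg\H{o} weight are eventually constant, hence the Verblunsky coefficients are eventually constant'' is a non sequitur: the Geronimus relations do not transport eventual constancy backwards --- inverting them from eventually constant Jacobi data is a nonlinear recursion whose solutions generically only converge to the constant fixed point, so exact constancy of $\Phi_n(0)$ must be \emph{proved}, not read off. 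For this particular weight the claim does happen to be true (it is encoded in the representation the paper quotes from Golinskii, from which one can check $\Phi_n(0)=\sin(\alpha/2)$ for all $n\ge 2$, so $|a|=\sin(\alpha/2)$ as you guessed), but as written your derivation of the fixed--$P,Q$ formula $\Phi_n=P\Lambda_+^n+Q\Lambda_-^n$ rests on an unproved step; the fix is either to cite/derive the Akhiezer--Golinskii representation (the paper's route) or to actually solve the inverse Geronimus recursion (equivalently, identify the measure as a finite Verblunsky perturbation of the Geronimus measure with parameter $\sin(\alpha/2)$).

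Two further points of comparison. First, your $g=\tfrac12\log|\Lambda_+/\Lambda_-|$ is not the Green function of $\widehat{\mathbb{C}}\setminus\Gamma_\alpha$ with pole at $\infty$: since $\Lambda_-(0)=0$ it also has a logarithmic pole at $z=0$ (it is a combination of the Green functions with poles at $0$ and $\infty$), so identifying $\partial_{\mathbf n}g$ with $2\pi\omega$ for the equilibrium density $\omega$ would distort the constant in $f$; only the local facts (vanishing on the arc, positive continuous normal derivative at interior points) are what you actually need, and those are fine. Second, your error bookkeeping proves only $1-|z_{n,j_n}|=f(\theta_0)/n+o(1/n)$: the value $\log|Q/P|(z_{n,j_n})$ differs from $\log|Q/P|(e^{i\theta_0})$ by a term controlled only by the (possibly slow) tangential convergence $z_{n,j_n}\to e^{i\theta_0}$, so the stated $O(1/n^2)$ does not follow from continuity alone. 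This, however, is exactly the precision the paper's own proof attains (it fixes $\theta_0\in(\alpha,2\pi-\alpha)$ and derives $\lim_n n(1-|v_{n,j}|)=f(\omega_0)$), and the paper does not treat the endpoints either, so your Bessel--parametrix concerns, while legitimate for the closed--arc statement, go beyond what the paper itself establishes.
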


\begin{proof} We have
\begin{equation}\label{OPUCarco}
\varphi_n(z)=K_n\,\left\{\frac{w^n(v)}{1-\beta v}+\frac{v\, w^n(1/v)}{v-\beta}\right\},\quad z=h(v),
\end{equation}
where $\beta=i\tan \frac{\pi-\alpha}{4}$,
\[
w(v)=i\frac{1-\beta v}{v+\beta},\quad w(1/v)=i\frac{v-\beta }{1+\beta v},\quad
z=h(u)=\frac{(v-\beta)(\beta v-1)}{(v+\beta)(\beta v+1)}.
\]
See \cite{Gol}. Also, these polynomials were studied by Akhiezer.

The function $w=w(v)$ is an invertible analytic homeomorphism of $\mathbb{D}$ to $\mathbb{D}$
and $z=h(u)=\frac{(v-\beta)(\beta v-1)}{(v+\beta)(\beta v+1)}$ is analytic in $\mathbb{C}\setminus\{-\beta,-\frac1\beta\}$ and a homeomorphism of $\mathbb{D}\setminus\{-\beta\}$ to $\mathbb{C}\setminus\Delta_\alpha$ where, $\Delta_\alpha=\{e^{i\theta}:\theta\in[\alpha,2\pi-\alpha\}$.

Each zero $z_{n,j}$ of (\ref{OPUCarco}) corresponds with an unique $v_{n,j}: h(v_{n,j})=z_{n,j},$ $|v_{n,j}|<1$. Since $z_{n,j}\to \Delta_\alpha$, as $n\to\infty$, we have $|v_{n,j}|\to 1$. Moreover, we know
\begin{equation}\label{EquationAux8}
\frac{w^n(v_{n,j})}{1-\beta v_{n,j}}+\frac{v_{n,j}\, w^n(1/v_{n,j})}{v_{j,n}-\beta}=0
\Leftrightarrow
\frac{w^n(1/v_{n,j})}{w^n(v_{n,j})}=-\frac{v_{n,j}-\beta}{v_{n,j}(1-\beta v_{n,j})}.
\end{equation}

Consider $e^{i\theta_0}\in\Delta_{\alpha}$ with $\theta_0\in(\alpha,2\pi-\alpha)$ and $e^{i\theta_0}=\lim_{n}z_{n,j}$, here $j=j_n$ changes with $n$. Actually, we have $n\in\Lambda$ a sequence of indexes such that $z_{n,j}, n\in\Lambda,$ has limit $e^{i\theta_0}$. Throughout, we consider such indexes. Thus,
\[
\lim_{n}v_{n,j}=e^{i\omega_0},\quad e^{i\theta_0}=h(e^{i\omega_0}),\quad \omega_0\in(0,\pi),
\]
\[
\lim_n \Im(v_{n,j})=\sin\omega_0,
\]
\[
\lim_n \frac1{|v_{n,j}|^2} \frac{|v_{n,j}|^2-2\Re(\overline{v_{n,j}}\beta)+|\beta|^2}{1-2\Re(v_{n,j}\beta)+|v_{n,j}\beta|^2} = \frac{1-2\tan\eta\sin\omega_0+|\beta|^2}{1+2\tan\eta\sin\omega_0+|\beta|^2}\in(0,1).
\]

On the other hand,
\begin{multline*}
\left|\frac{v_{n,j}-\beta}{v_{n,j}(1-\beta v_{n,j})}\right|^2=\frac1{|v_{n,j}|^2}\frac{(v_{n,j}-\beta) (\overline{v_{n,j}}-\overline{\beta})}{(1-\beta v_{n,j})(1-\overline{\beta v_{n,j}})}\\= \frac1{|v_{n,j}|^2} \frac{|v_{n,j}|^2-2\Re(\overline{v_{n,j}}\beta)+|\beta|^2}{1-2\Re(v_{n,j}\beta)+|v_{n,j}\beta|^2},
\end{multline*}
and $\Re(v_{n,j}\beta)=-\Im(v_{n,j})\tan\eta$, $\Re(\overline{v_{n,j}}\beta)=\Im(v_{n,j})\tan\eta$.

Moreover,
\[
|w(1/v_{n,j})|^2=\left|\frac{1-\beta v_{n,j}}{v_{n,j}+\beta }\right|^2=\frac{1-2\Re(v_{n,j}\beta) +|v_{n,j}\beta|^2}{|v_{n,j}|^2+2\Re(\overline{v_{n,j}}\beta)+|\beta|^2}\to 1,\quad \text{as }n\to\infty,
\]
\begin{multline*}
|w(1/v_{n,j})|^2-1\\=\frac{(1-|v_{n,j}|^2)(1-|\beta|^2)} {1+2\Re(\overline{v_{n,j}}\beta)+|\beta|^2}\sim 2 (1-
|v_{n,j}|)\frac{(1-|\beta|^2)} {1+2 \tan\eta\sin \omega_0+\tan^2\eta}.
\end{multline*}
\[
|w(v_{n,j})|^2=\left|\frac{v_{n,j}-\beta }{1+\beta v_{n,j}}\right|^2=\frac{|v_{n,j}|^2-2\Re(\overline{v_{n,j}}\beta) +|\beta|^2}{1+2\Re(v_{n,j}\beta)+|v_{n,j}\beta|^2}\to 1,\quad \text{as }n\to\infty,
\]
\[
|w(v_{n,j})|^2-1=\frac{(|v_{n,j}|^2-1)(1-|\beta|^2)} {1+2\Re(v_{n,j}\beta)+|v_{n,j}\beta|^2}\sim -2 (1-|v_{n,j}|)\frac{(1-|\beta|^2)} {1-2 \tan \eta\sin \omega_0+|\beta|^2}.
\]

Hence,
\begin{multline*}
\left|\frac{w(1/v_{n,j})}{w(v_{n,j})}\right|^2-1=\frac{|w(1/v_{n,j})|^2-|w(v_{n,j})|^2}{|w(v_{n,j})|^2} =\frac{-2(1-|\beta|^2)(1-|v_{n,j}|)}{|w(v_{n,j})|^2}\\ \underset{n\to\infty}{\rightarrow}\frac{4\tan \eta\sin \omega_0}{(1+2 \tan \eta\sin \omega_0+|\beta|^2)(1-2 \tan \eta\sin \omega_0+|\beta|^2)}\\ =\frac{-8(1-|\beta|^2)(1-|v_{n,j}|) \tan \eta\sin \omega_0}{(1-2 \tan \eta\sin \omega_0+|\beta|^2)(1+2 \tan \eta\sin \omega_0+|\beta|^2)}.
\end{multline*}

From (\ref{EquationAux8}), we obtain
\[
\left|\frac{w(1/v_{n,j})}{w(v_{n,j})}\right|^n\underset{n}{\to} \frac{1-2\tan\eta\sin\omega_0+|\beta|^2}{1+2\tan\eta\sin\omega_0+|\beta|^2},
\]
so,
\[ n\left(\left|\frac{w(1/v_{n,j})}{w(v_{n,j})}\right|-1\right)\underset{n}{\rightarrow } \log\left(\frac{1-2\tan\eta\sin\omega_0+|\beta|^2}{1+2\tan\eta\sin\omega_0+|\beta|^2}\right),
\]
Thus,
\[
\lim_nn\, (1-|v_{n,j}|)= f(\omega_0)
\]
where
\begin{multline*}
f(\omega_0)=\frac{(1-2 \tan \eta\sin \omega_0+|\beta|^2)(1+2 \tan \eta\sin \omega_0+|\beta|^2)}{8(1-|\beta|^2)\tan \eta\sin \omega_0} \\ \times \log\left(\frac{1+2\tan\eta\sin\omega_0+|\beta|^2}{1-2\tan\eta\sin\omega_0+|\beta|^2}\right).
\end{multline*}

Therefore,
\begin{multline*}
|z_{n,j}|=|h(v_{n,j})|=|w(v_{n,j})||w(\frac1{v_{n,j}})|\\=(1+|w(v_{n,j})|-1)(1+|w(\frac1{v_{n,j}})|-1) \\ \sim\left(1+\frac{2 (1-|v_{n,j}|)(1-|\beta|^2)} {1+2 \tan \eta\sin \omega_0+|\beta|^2}\right)\left(1+\frac{2 (|v_{n,j}|-1)(1-|\beta|^2)} {1-2 \tan\eta\sin \omega_0+\tan^2\eta}\right)\\ \sim 1-\frac1n \widetilde{f}(\omega_0),
\end{multline*}
where
\[
\widetilde{f}(\omega_0)= \log\left(\frac{1+2\tan\eta\sin\omega_0+\tan^2\eta}{1-2\tan\eta\sin\omega_0+\tan^2\eta}\right).
\]
\end{proof}

\begin{rmk}
The Figures \ref{GrafCerosPerDos}--\ref{GrafCerosPerTres2} were generated in Mathematica 6.
\end{rmk}

%%incluir la figura en la última versión
\begin{figure}
\begin{center}
\includegraphics[scale=0.7]{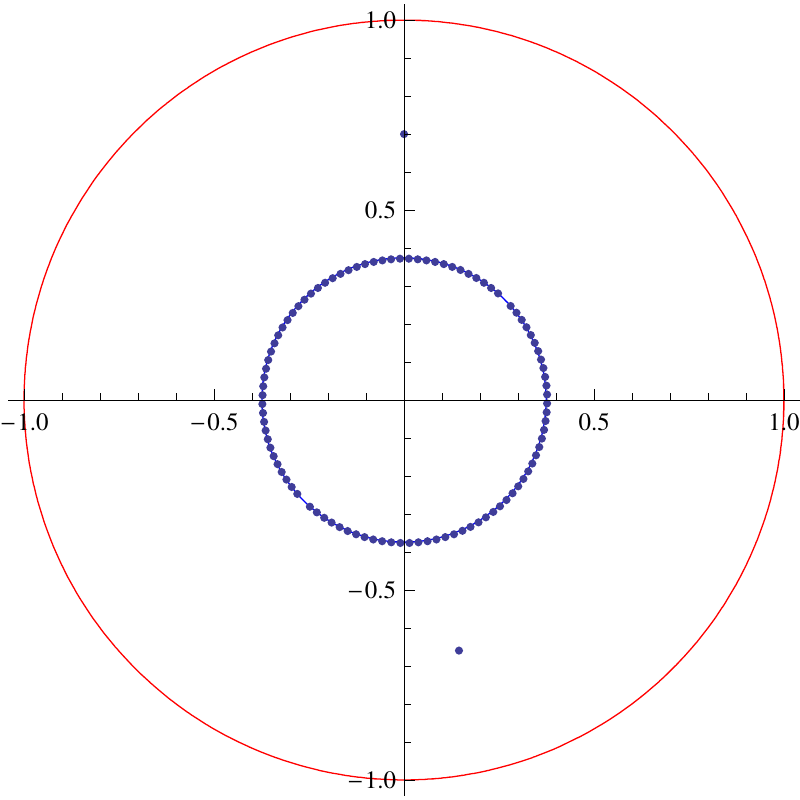}
\end{center}
\caption{\label{GrafCerosPerDos} Zeros of $\Phi_{100}$ for two period zeros: $0.2$ and $0.7i$.}
\end{figure}
\begin{figure}
\begin{center}
\includegraphics[scale=0.7]{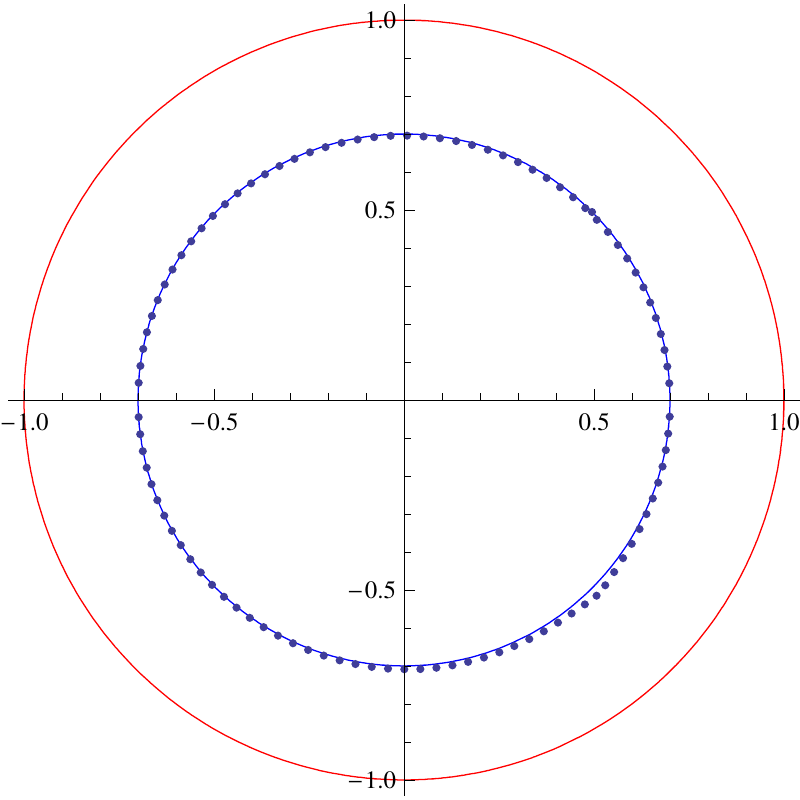}
\end{center}
\caption{\label{GrafCerosPerDos2} Zeros of $\Phi_{100}$ for two period zeros: $0.7e^{-i\frac\pi4}$ and $0.7e^{i\frac\pi4}$.}
\end{figure}

%\begin{multicols}{2}

\begin{figure}
\begin{center}
\includegraphics[scale=0.7]{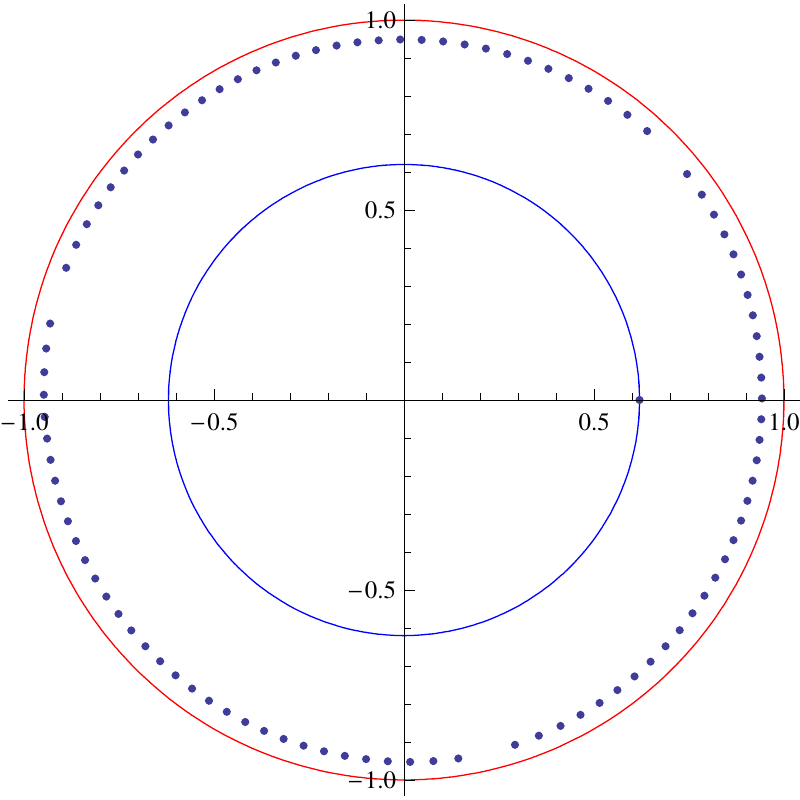}
\end{center}
\caption{\label{GrafCerosPerTres} Zeros of $\Phi_{100}$ for three period zeros:  $0.62$, $0.62e^{i\frac{2\pi}3}$ and $0.62 e^{-i\frac{2\pi}3}$. Observe $\frac{-1+\sqrt{5}}2=0.618034\ldots<0.62$}
\end{figure}

\begin{figure}
\begin{center}
\centering\includegraphics[scale=0.7]{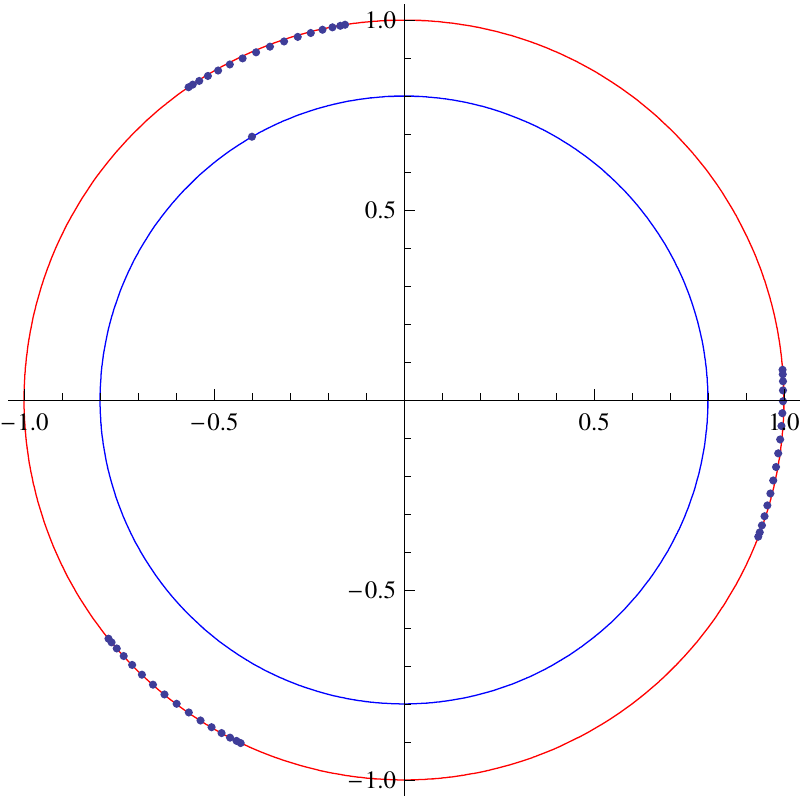}
\end{center}
\caption{\label{GrafCerosPerTres2} Zeros of $\Phi_{50}$ for three period zeros: $0.8$, $0.8e^{i\frac{2\pi}3}$ and $0.8 e^{-i\frac{2\pi}3}$. When the degree of the OPUC is larger than 50  calculating the zeros appear numerical instability in Mathematica}
\end{figure}

%\end{multicols}

\end{document}